\newcommand{\dv}{\operatorname{div}}
\newcommand{\sgn}{\operatorname{sign}}
\def\bu{\mathbf{u}}
\def\bv{\mathbf{v}}
\def\bx{\mathbf{x}}
\def\R{\mathbf{R}}
\def\F{\mathbf F}
\def\A{\mathcal A}
\def\bA{\mathbf A}
\def\bcero{\mathbf 0}
\newtheorem{theorem}{Theorem}[section]
\newtheorem{lemma}[theorem]{Lemma} 
\newtheorem{proposition}[theorem]{Proposition}
\begin{document}

\title{On a variant of Tykhonov regularization in optimal control under PDEs}
\author{Pablo Pedregal}
\date{} 
\thanks{INEI, U. de Castilla-La Mancha, 13071 Ciudad Real, SPAIN. Supported by grant 
MTM2017-83740-P}
\begin{abstract}
We make some remarks on a variant of the classical Tikhonov regularization in optimal control under PDEs which allows for a certain flexibility in dealing with non-linearities and state restrictions,  in the sense that differential constraints between control and state are eliminated and pairs can run freely in their respective sets of feasibility, at the expense of introducing an additional variable in a collection of approximated problems. In addition to exploring basic issues like existence and optimality, we also discuss a numerical procedure and apply it to some academic, illustrative numerical tests, as well as examine the convergence of solutions of this new family of approximated problems to the solutions of the underlying optimal control problem. 
\end{abstract}
\maketitle
\section{Introduction}
In formal terms, optimal control focuses on situations in which one would like to minimize a certain cost functional 
$$
I[\bu, \bv]:\A\to\R
$$
where both sets of variables $(\bu, \bv)$ are typically coupled through a (non-local) differential law
$$
\F(\bu, \bv)=\bcero.
$$
$\A$ is the set of competing pairs understood as a certain subset of an appropriate functional space, and $\F$ stands for the differential operator, together with appropriate boundary conditions, that produces, in a unique way, the output $\bu$, the state variable, once the input $\bv$, the control variable, has been provided. These optimization problems are very well understood in various contexts and under varying sets of assumptions, and their range of applicability knows no limits. There are two main sources of concern that push the analysis to more complicated frameworks:
\begin{enumerate}
\item if the operator $\F(\bu, \bv)=\bcero$ is non-linear in $\bu$ for given $\bv$, then finding the state $\bu$, for a prescribed control law $\bv$, may be quite involved depending on the nature of the non-linearity, and, in practice, may require an iterative mechanism to find, or approximate, $\bu$ once $\bv$ is known;
\item the situation in which additional, typically pointwise,  constraints, for both state $\bu$ and control $\bv$, are to be enforced, is even more dramatic because a practical understanding of the differential law $\F(\bu, \bv)=\bcero$ related to pointwise constraints seems to be hopeless.
\end{enumerate}
See \cite{casasmateos}, \cite{nst}, \cite{troltzsch} and more especialized literature therein to appreciate the difficulties associated with state constraints. 

We would like to stress in this contribution how, by allowing a bit of flexibility in the formulation of the problem, both situations can be eased and the analysis carried out in a more affordable and straightforward manner. Our motivation, as stated in the abstract, is to free problems from having to deal with difficult constraints at the expense of introducing new variables and approximations. Though many situations can be treated, we will stick to two typical, explicit scenarios to convey our remarks. 

The first one deals with a semilinear elliptic equation as state law. Our model problem involves a standard Tikhonov regularization (\cite{casasmateos}) of the type
$$
\hbox{Minimize in }v\in L^2(\Omega):\quad 
I[v]=\int_\Omega\left(\frac12|u(\bx)-\overline u(\bx)|^2+\frac\mu2|v(\bx)|^2\right)\,d\bx
$$
where
\begin{equation}\label{estado}
-\dv(\nabla u)+\phi(u)=v\hbox{ in }\Omega,\quad u=0\hbox{ in }\partial\Omega.
\end{equation}
Here $\Omega\subset\R^N$ is a regular, bounded domain, $\overline u$ is a certain desired target function in $L^2(\Omega)$, $\mu>0$, and $\phi$ is a real function with the appropriate growth at infinity so that the composition $\phi(u)$ belongs to $L^2(\Omega)$ for every $u\in H^1_0(\Omega)$. The whole point is that the state law \eqref{estado} requires to solve a non-linear PDE to find the state $u$ once the control $v$ has been specified. In particular, those two variables $u$ and $v$ are strongly coupled and cannot be specified independently of each other in $H^1_0(\Omega)\times L^2(\Omega)$. 

The second one addresses the issue of pointwise constraints for state $u$ and control $v$. To concentrate specifically on this issue, we choose a linear state equation to simplify other ingredients, and deal with the problem
$$
\hbox{Minimize in }(u, v)\in H^1_0(\Omega)\times L^\infty(\Omega):\quad 
I[u, v]=\int_\Omega\frac12|u(\bx)-\overline u(\bx)|^2\,d\bx
$$
where
\begin{gather}
-\dv(\nabla u)+u=v\hbox{ in }\Omega,\quad u=0\hbox{ in }\partial\Omega,\nonumber\\
u(\bx)\le 0,\quad v_-(\bx)\le v(\bx)\le v_+(\bx)\quad\hbox{ for a.e. }\bx\in\Omega,\nonumber
\end{gather}
where $v_\pm$ are prescribed $L^\infty(\Omega)$-functions. It is virtually impossible to anticipate the conditions on the control $v$ that ensure that the corresponding state $u$ will comply with the condition $u\le0$, and this is the main difficulty with state constraints. 

As already indicated, in both situations, we would like to enable a bit of flexibility in the state equation in such a way that both variables $u$ and $v$ can be given freely and independently of each other in their respective sets of feasibility, either $H^1_0(\Omega)\times L^2(\Omega)$, or
$$
\{u\in H^1_0(\Omega): v\le0\}\times\{v\in L^\infty(\Omega): v_-\le v\le v_+\}.
$$
This flexibility can be implemented in several ways but the possibility that we would like to focus on amounts to introducing a ``defect" or residual function $w\in H^1_0(\Omega)$ that is determined, in a unique way once $u$ and $v$ are given, through the equation
$$
-\dv(\nabla u+\nabla w)+\phi(u)=v\hbox{ in }\Omega,
$$
in the first situation, or 
$$
-\dv(\nabla u+\nabla w)+u=v\hbox{ in }\Omega,
$$
in the second. In addition, to account for a small size of this defect $w$ and not stay too far from the initial optimal control problemR, we change the cost functional to
$$
I[u, v]=\int_\Omega\left(\frac12|u(\bx)-\overline u(\bx)|^2+\frac\mu2|v(\bx)|^2+\frac\lambda2|\nabla w(\bx)|^2\right)\,d\bx
$$
or
$$
I[u, v]=\int_\Omega\left(\frac12|u(\bx)-\overline u(\bx)|^2+\frac\lambda2|\nabla w(\bx)|^2\right)\,d\bx,
$$
respectively. If the parameter $\lambda$ is large, a small size of the defect $w$ is expected and so we will be rather close to the true, exact state law. 

We will therefore examine the two problems
\begin{equation}\label{primero}
\hbox{Minimize in }(u, v):\quad
I[u, v]=\int_\Omega\left(\frac12|u(\bx)-\overline u(\bx)|^2+\frac\mu2|v(\bx)|^2+\frac\lambda2|\nabla w(\bx)|^2\right)\,d\bx
\end{equation}
under
\begin{gather}
(u, v)\in H^1_0(\Omega)\times L^2(\Omega),\nonumber\\
-\dv(\nabla u+\nabla w)+\phi(u)=v\hbox{ in }\Omega,\quad w=0\hbox{ on }\partial\Omega,\nonumber
\end{gather}
and
\begin{equation}\label{segundo}
\hbox{Minimize in }(u, v):\quad
I[u, v]=\int_\Omega\left(\frac12|u(\bx)-\overline u(\bx)|^2+\frac\lambda2|\nabla w(\bx)|^2\right)\,d\bx
\end{equation}
under
\begin{gather}
(u, v)\in \A=\{(u, v)\in H^1_0(\Omega)\times L^2(\Omega): u\le0, v_-\le v\le v_+\},\nonumber\\
-\dv(\nabla u+\nabla w)+u=v\hbox{ in }\Omega,\quad w=0\hbox{ on }\partial\Omega.\nonumber
\end{gather}

A much more general framework is allowed by taking
$$
\hbox{Minimize in }(u, v)\in\A:\quad \int_\Omega L(\bx, u(\bx), v(\bx), \nabla w(\bx))\,d\bx
$$
subject to
$$
-\dv[\bA(\bx, \nabla u(\bx))+\nabla w(\bx)]+\phi(\bx, u(\bx))=\psi(\bx, v(\bx))\hbox{ in }\Omega,\quad
w=0\hbox{ on }\partial\Omega,
$$
and we typically throw pointwise, or other form of (non-differential) conditions into the admissible set $\A$. $L$, $\phi$, $\psi$ and $\bA$ need to comply with appropriate hypotheses which we do not bother to specify at this stage, as we will deal directly with the two indicated problems to convey a few remarks. We will treat successively existence of optimal solutions, optimality conditions, and direct, practical numerical approximation. These approximations are given for the sake of illustration, as finer experiments would require an expertise that the author cannot claim to posses. 
Finally, we will make a few observations concerning the limit behavior as the parameter $\lambda$ is taken to $+\infty$. 

Optimal control governed by PDEs have been examined thoroughly since the pioneering work of J. L. Lions (see \cite{lions1}, \cite{lions}, \cite{lions2} for instance) because of the mathematical methods these problems require, and their uncountable applications in all fields of Science and Engineering. Other important references, that also deal with the numerical approximation and in addition to those already mentioned above without any attempt to be exhaustive, are \cite{gugat}, \cite{kogutleugering}, \cite{lastriuno}, \cite{lastridos}, \cite{volkwein}. 

\section{Main results for the unconstrained situation}\label{dos}
As indicated, we will stick, for the sake of definiteness, to the model problem
$$
\hbox{Minimize in }(u, v):\quad \int_\Omega\left(\frac12|u(\bx)-\overline u(\bx)|^2+\frac\mu2|v(\bx)|^2+\frac{\lambda}2|\nabla w(\bx)|^2\right)\,d\bx
$$
subject to $(u, v)\in H^1_0(\Omega)\times L^2(\Omega)$, and 
\begin{equation}\label{state}
-\dv[\nabla u(\bx)+\nabla w(\bx)]+\phi(u(\bx))=v(\bx)\hbox{ in }\Omega,\quad
w=0\hbox{ on }\partial\Omega.
\end{equation}
Let us stress again that the main reason to introduce the additional variable $w$ into the problem is to let pairs $(u, v)$ run freely in the feasible space $H^1_0(\Omega)\times L^2(\Omega)$ so that there is no dependence of $u$ on $v$.

Our main assumptions are:
\begin{enumerate}
\item $\Omega\subset\R^N$ is a regular, bounded domain (as regular as we may need it to be);
\item the function $\overline u\in L^2(\Omega)$;
\item $\mu, \lambda>0$;
\item $\phi:\R\to\R$ is a smooth function and has, at most, growth $N/(N-2)$ at infinity so that the composition $\phi(u)\in L^2(\Omega)$ for every $u\in H^1_0(\Omega)$.
\end{enumerate}

Under these assumptions, equation \eqref{state} is always well-defined, and so there is a unique solution $w\in H^1_0(\Omega)$ for every admissible pair $(u, v)\in H^1_0(\Omega)\times L^2(\Omega)$. 

On the other side of the spectrum, we can consider a general, non-quadratic cost under a linear state equation, like
\begin{equation}\label{statee}
\hbox{Minimize in }(u, v):\quad \int_\Omega\left(\psi(u(\bx), v(\bx), \bx)+\frac{\lambda}2|\nabla w(\bx)|^2\right)\,d\bx
\end{equation}
subject to $(u, v)\in H^1_0(\Omega)\times L^2(\Omega)$, and 
$$
-\dv[\nabla u(\bx)+\nabla w(\bx)]=v(\bx)\hbox{ in }\Omega,\quad
w=0\hbox{ on }\partial\Omega,
$$
or any other linear PDE tying $w$ to the pair $(u, v)$. We will be using later this particular version of a unconstrained problem for the direct numerical approximation under state and control pointwise constraints. 

The treatment for both versions of a unconstrained problem, or the more general version incorporating non-linearities both in the state equation and the functional cost for that matter, is formally the same so that, for the sake of brevity, we will focus on the first one for our treatment of existence, optimality, and numerical approximation. 

\subsection{Existence of optimal solutions}
The first important issue is that of existence of optimal solutions. 
\begin{theorem}\label{existencia}
Suppose, in addition to the hypotheses indicated above, that there is an affine function $l(u)$ such that $(l(u)-\phi(u))u\le M$ for all $u\in\R$, and some non-negative constant $M$. Then
there are optimal pairs $(u, v)\in H^1_0(\Omega)\times L^2(\Omega)$ for our optimal control problem. If, in addition, $\phi$ is affine, then the optimal pair is unique.
\end{theorem}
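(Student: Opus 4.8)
The plan is to apply the direct method of the calculus of variations. Since the integrand is a sum of three non-negative terms, $I$ is bounded below by $0$, so $m=\inf I\ge0$ is finite; I would then fix a minimizing sequence $(u_n,v_n)\in H^1_0(\Omega)\times L^2(\Omega)$ with associated defects $w_n=w[u_n,v_n]\in H^1_0(\Omega)$ solving \eqref{state} and $I[u_n,v_n]\to m$. Directly from the boundedness of $I[u_n,v_n]$ I read off that $\|u_n-\overline u\|_{L^2(\Omega)}$, $\|v_n\|_{L^2(\Omega)}$ and $\|\nabla w_n\|_{L^2(\Omega)}$ are uniformly bounded, hence so are $\|u_n\|_{L^2(\Omega)}$, $\|v_n\|_{L^2(\Omega)}$ and $\|w_n\|_{H^1_0(\Omega)}$. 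The only bound that is not immediate is the control of $\|\nabla u_n\|_{L^2(\Omega)}$, and this is exactly where the structural hypothesis on $\phi$ enters.

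To obtain it, I would test the weak form of \eqref{state} with $\varphi=u_n$ (legitimate since $u_n\in H^1_0(\Omega)$ and $\phi(u_n),v_n\in L^2(\Omega)$), giving the energy identity
$$
\int_\Omega|\nabla u_n|^2\,d\bx+\int_\Omega\nabla w_n\cdot\nabla u_n\,d\bx+\int_\Omega\phi(u_n)u_n\,d\bx=\int_\Omega v_nu_n\,d\bx.
$$
The hypothesis $(l(u)-\phi(u))u\le M$ yields the pointwise lower bound $\phi(u_n)u_n\ge l(u_n)u_n-M$, and since $l$ is affine the term $\int_\Omega l(u_n)u_n\,d\bx$ is controlled by $\|u_n\|_{L^2(\Omega)}$, which is already bounded. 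Estimating $\int_\Omega\nabla w_n\cdot\nabla u_n\,d\bx$ by Young's inequality as $\tfrac12\|\nabla u_n\|_{L^2}^2+\tfrac12\|\nabla w_n\|_{L^2}^2$, I can absorb half of $\|\nabla u_n\|_{L^2}^2$ into the left-hand side and collect everything else into quantities already known to be bounded, concluding that $\|\nabla u_n\|_{L^2(\Omega)}$, hence $\|u_n\|_{H^1_0(\Omega)}$, is uniformly bounded.

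With these bounds I would pass to a subsequence so that $u_n\rightharpoonup u$ in $H^1_0(\Omega)$, $v_n\rightharpoonup v$ in $L^2(\Omega)$ and $w_n\rightharpoonup w$ in $H^1_0(\Omega)$; by the compact embedding $H^1_0(\Omega)\hookrightarrow L^2(\Omega)$ one also has $u_n\to u$ strongly in $L^2(\Omega)$ and, along a further subsequence, a.e.\ in $\Omega$. The linear terms of the weak form of \eqref{state} pass to the limit by weak convergence; for the nonlinear term I would use that $u_n\to u$ a.e.\ together with the growth hypothesis on $\phi$ (which, matched to $H^1_0\hookrightarrow L^{2N/(N-2)}$, keeps $\phi(u_n)$ bounded in $L^2(\Omega)$) to deduce $\phi(u_n)\rightharpoonup\phi(u)$ in $L^2(\Omega)$. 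Hence $(u,v,w)$ satisfies \eqref{state}, so by the uniqueness of the defect we have $w=w[u,v]$, and $(u,v)$ is admissible. Finally, the first term of $I$ is continuous under the strong $L^2$ convergence of $u_n$, while the $v$- and $\nabla w$-terms are weakly lower semicontinuous, so $\liminf_n I[u_n,v_n]\ge I[u,v]$; combined with minimality this forces $I[u,v]=m$, and $(u,v)$ is optimal.

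For the uniqueness statement, when $\phi$ is affine the state equation \eqref{state} becomes linear in $(u,v)$, so the map $(u,v)\mapsto\nabla w$ is affine and $\tfrac\lambda2\|\nabla w\|_{L^2}^2$ is a convex functional of $(u,v)$. The remaining terms $\tfrac12\|u-\overline u\|_{L^2}^2$ and $\tfrac\mu2\|v\|_{L^2}^2$ are strictly convex in $u$ and in $v$ respectively (here $\mu>0$ is used), so their sum is strictly convex jointly in $(u,v)$ on the linear feasible space, and adding a convex term preserves strict convexity. A strictly convex functional has at most one minimizer, which together with existence gives uniqueness. The step I expect to be most delicate is the coercivity estimate of the second paragraph, since it is the only place where the one-sided hypothesis $(l(u)-\phi(u))u\le M$ is genuinely needed to tame a possibly non-monotone nonlinearity and recover an $H^1_0$ bound on $u_n$.
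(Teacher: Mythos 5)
Your proposal is correct and follows essentially the same route as the paper: direct method, the key coercivity estimate obtained by testing the state equation with $u_n$ and invoking the one-sided hypothesis $(l(u)-\phi(u))u\le M$, weak/strong limit passage in the state equation, weak lower semicontinuity, and uniqueness via strict convexity in the affine case. The only differences are cosmetic: you absorb the cross term $\int_\Omega\nabla w_n\cdot\nabla u_n\,d\bx$ by Young's inequality where the paper solves a quadratic inequality $x^2\le ax+b$, and you justify the limit of the nonlinear term via a.e.\ convergence plus $L^2$-boundedness of $\phi(u_n)$, which is in fact slightly more careful than the paper's appeal to strong $L^2$ convergence alone.
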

\begin{proof}
Note that the functional
$$
I[u, v]=\int_\Omega\left(\frac12|u(\bx)-\overline u(\bx)|^2+\frac\mu2|v(\bx)|^2+\frac{\lambda}2|\nabla w(\bx)|^2\right)\,d\bx
$$
is non-negative. Let $\{(u_j, v_j)\}\subset H^1_0(\Omega)\times L^2(\Omega)$ be a minimizing sequence, and let $w_j\in H^1_0(\Omega)$ be such that
\begin{equation}\label{estadoo}
-\dv[\nabla u_j(\bx)+\nabla w_j(\bx)]+\phi(u_j(\bx))=v_j(\bx)\hbox{ in }\Omega.
\end{equation}
We immediately deduce that $\{u_j\}$, $\{v_j\}$, and $\{w_j\}$ are uniformly bounded in $L^2(\Omega)$, in $L^2(\Omega)$, and in $H^1_0(\Omega)$, respectively, and so, for a subsequence which we do not care to relabel, we can assume that
$$
u_j\rightharpoonup u\hbox{ in }L^2(\Omega),\quad
v_j\rightharpoonup v \hbox{ in }L^2(\Omega),
\quad w_j\rightharpoonup w\hbox{ in }H^1_0(\Omega).
$$
The main step of the proof focuses on showing that in fact $u_j\rightharpoonup u$ in $H^1_0(\Omega)$. Indeed, from \eqref{estadoo}, using $u_j$ itself as a test function, we find that
\begin{equation}\label{igualdad}
\int_\Omega[|\nabla u_j|^2+\nabla w_j\cdot\nabla u_j+(\phi(u_j)-l(u_j))u_j]\,d\bx=
\int_\Omega  (v_j-l(u_j))u_j\,d\bx.
\end{equation}
The uniform-boundedness condition assumed on the product $(l(u_j)-\phi(u_j))u_j$ enables us to write, for some constant $C$, 
$$
 \|\nabla u_j\|_{L^2(\Omega)}^2\le \|w_j\|_{H^1_0(\Omega)}\|\nabla u_j\|_{L^2(\Omega)}+\|u_j\|_{L^2(\Omega)}\|v_j\|_{L^2(\Omega)}+M|\Omega|+C\|u_j\|_{L^2(\Omega)}^2.
 $$
 Since the last three terms are uniformly bounded for all $j$, as well as the norm of $w_j$, this condition is a quadratic inequality for the non-negative number $\|\nabla u_j\|_{L^2(\Omega)}$ with uniformly bounded coefficients of the form
 $$
 x^2\le ax+b,\quad x=\|\nabla u_j\|_{L^2(\Omega)}
 $$
 and $a$ and $b$ fixed, positive numbers. Thus
the sequence of numbers $\{\|\nabla u_j\|\}$ is also uniformly bounded. 

We can, therefore, assume, without loss of generality, that the weak convergence $u_j\rightharpoonup u$ takes place in $H^1_0(\Omega)$. In this case, for a test function $\theta(\bx)\in H^1_0(\Omega)$, we will have
$$
\int_\Omega(\nabla u_j\cdot\nabla\theta+\nabla w_j\cdot\nabla\theta+\phi(u_j)\theta)\,d\bx=\int_\Omega v_j\theta\,d\bx.
$$
By taking limits in $j$ in both sides of this identity, bearing in mind the strong convergence $u_j\to u$ in $L^2(\Omega)$, we are led to
$$
\int_\Omega(\nabla u\cdot\nabla\theta+\nabla w\cdot\nabla\theta+\phi(u)\theta)\,d\bx=\int_\Omega v\theta\,d\bx,
$$
and so $w$ is the corresponding ``defect" for the pair $(u, v)\in H^1_0(\Omega)\times L^2(\Omega)$. Because the functional is convex in $u$, $v$ and $w$, we have that
$$
I[u, v]\le\liminf_{j\to\infty}I[u_j, v_j],
$$
and the pair $(u, v)$ is optimal for the problem. 

The uniqueness, in case $\phi$ is affine, is a direct consequence of the strict convexity of the functional. This is standard. 
\end{proof}

\subsection{Optimality}
To write optimality conditions we use capital letters to indicate feasible variations of the various functions involved. In this way, if we formally write to first-order in $\epsilon$
\begin{equation}\label{perture}
-\dv[\nabla u+\epsilon\nabla U+\nabla w+\epsilon\nabla W]+\phi(u+\epsilon U)=v+\epsilon V\hbox{ in }\Omega
\end{equation}
 for a local perturbation $W$ produced in $w$ due to the perturbations $U$ and $V$ on $u$ and $v$, respectively, and differentiate with respect to $\epsilon$, setting $\epsilon=0$ afterwards, we find
\begin{equation}\label{pertur}
-\dv[\nabla U+\nabla W]+\phi'(u)U=V\hbox{ in }\Omega,
\end{equation}
where $U, W\in H^1_0(\Omega)$ while $V\in L^2(\Omega)$. Equation \eqref{pertur} is regarded as the equation providing the perturbation $W$ in $w$ once changes $U$ on $u$ and $V$ on $v$ have been given.

Concerning the functional, we would have to differentiate the expression
$$
\int_\Omega\left(\frac12|u+\epsilon U-\overline u|^2+\frac\mu2|v+\epsilon V|^2+\frac{\lambda}2|\nabla w+\epsilon\nabla W|^2\right)\,d\bx
$$
with respect to $\epsilon$, and set $\epsilon=0$. It yields
\begin{align}
\langle I'[u, v], (U, V)\rangle=&\left.\frac{d}{d\epsilon}I((u, v)+\epsilon(U, V))\right|_{\epsilon=0}\nonumber\\
=&\int_\Omega[(u-\overline u)U+\mu vV+\lambda\nabla w\cdot\nabla W]\,d\bx\nonumber
\end{align}
where
\begin{gather}
w:\quad -\dv[\nabla u+\nabla w]+\phi(u)= v\hbox{ in }\Omega,\nonumber\\
W:\quad -\dv[\nabla U+\nabla W]+\phi'(u)U=V\hbox{ in }\Omega.\nonumber
\end{gather}
If we use $w$ as a test function in this second equation, then
$$
\int_\Omega\nabla w\cdot\nabla W\,d\bx=\int_\Omega [Vw-\phi'(u)Uw-\nabla U\cdot\nabla w]\,d\bx,
$$
and, taking this identity back to the derivative of the functional, we arrive at
$$
\langle I'[u, v], (U, V)\rangle=
\int_\Omega[(u-\overline u)U+\mu vV+\lambda(Vw-\phi'(u)Uw-\nabla U\cdot\nabla w)]\,d\bx.
$$
If the pair $(u, v)\in H^1_0(\Omega)\times L^2(\Omega)$ is indeed the optimal solution of the control problem, then the previous integral must vanish for arbitrary $U\in H^1_0(\Omega)$ and $V\in L^2(\Omega)$. This fact immediately leads to the following statement. 

\begin{theorem}
Under the same hypotheses as before, let $(u, v)\in H^1_0(\Omega)\times L^2(\Omega)$ be an optimal pair of the problem according to Theorem \ref{existencia}, and set $w\in H^1_0(\Omega)$ for the corresponding defect so that
\begin{equation}\label{deebil}
-\dv[\nabla u(\bx)+\nabla w(\bx)]+\phi(u(\bx))=v(\bx)\hbox{ in }\Omega.
\end{equation}
Then
\begin{gather}
u-\lambda\phi'(u)w=\overline u-\lambda\Delta w\hbox{ in }\Omega,\nonumber\\
\mu v+\lambda w=0\hbox{ in }\Omega.\nonumber
\end{gather}
\end{theorem}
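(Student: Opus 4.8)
The plan is to read both optimality relations straight off the expression for the first variation
$$
\langle I'[u, v], (U, V)\rangle=\int_\Omega[(u-\overline u)U+\mu vV+\lambda(Vw-\phi'(u)Uw-\nabla U\cdot\nabla w)]\,d\bx
$$
already obtained just above the statement. The key structural observation I would exploit is that the feasible set $H^1_0(\Omega)\times L^2(\Omega)$ is an unconstrained product of linear spaces, so the perturbations $U\in H^1_0(\Omega)$ and $V\in L^2(\Omega)$ may be activated independently. Since $(u,v)$ is optimal and the admissible set is linear, this Gateaux derivative must vanish for \emph{every} admissible pair $(U,V)$, and the two desired equations emerge by successively switching off one of the two variations.

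First I would take $U=0$ and let $V$ range over all of $L^2(\Omega)$. The surviving terms collapse to $\int_\Omega(\mu v+\lambda w)V\,d\bx=0$ for every $V\in L^2(\Omega)$, whence the fundamental lemma of the calculus of variations forces $\mu v+\lambda w=0$ almost everywhere in $\Omega$, which is exactly the second relation. This step is purely algebraic and presents no difficulty.

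Next I would take $V=0$ and let $U$ range over $H^1_0(\Omega)$, which leaves
$$
\int_\Omega[(u-\overline u)U-\lambda\phi'(u)wU-\lambda\nabla U\cdot\nabla w]\,d\bx=0.
$$
The only term not already of the form ``(coefficient)$\times U$'' is the last one, which I would convert by integrating by parts, using $U=0$ on $\partial\Omega$, to $-\lambda\int_\Omega\nabla U\cdot\nabla w\,d\bx=\lambda\int_\Omega U\,\Delta w\,d\bx$. Collecting coefficients then yields $\int_\Omega[(u-\overline u)-\lambda\phi'(u)w+\lambda\Delta w]U\,d\bx=0$ for all $U\in H^1_0(\Omega)$, and after rearranging one obtains the first relation $u-\lambda\phi'(u)w=\overline u-\lambda\Delta w$.

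The one point requiring care, and the only genuine obstacle, is the meaning of $\Delta w$: a priori $w$ only lies in $H^1_0(\Omega)$, so the integration by parts produces $\Delta w$ as an element of $H^{-1}(\Omega)$, and the identity is initially valid only in the weak sense, namely $\int_\Omega[(u-\overline u)-\lambda\phi'(u)w]U\,d\bx=\lambda\int_\Omega\nabla w\cdot\nabla U\,d\bx$ for all test functions $U\in H^1_0(\Omega)$. I would then upgrade this to the stated strong form by noting that, under the growth hypothesis on $\phi$ (which via Sobolev embedding and Hölder keeps $\phi'(u)w$ in $L^2(\Omega)$), the right-hand side $u-\overline u-\lambda\phi'(u)w$ belongs to $L^2(\Omega)$; elliptic regularity applied to this very relation then gives $w\in H^2(\Omega)$, so that $\Delta w$ is a genuine $L^2$ function and the first equation holds pointwise almost everywhere.
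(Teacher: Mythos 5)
Your proof is correct and takes essentially the same route as the paper: the paper's argument for this theorem is exactly the preceding first-variation computation, from which the two equations are read off by activating $V$ (with $U=0$) and then $U$ (with $V=0$) independently, the second relation coming from the fundamental lemma and the first from integration by parts. Your extra care about the meaning of $\Delta w$ --- noting that the identity is a priori weak and upgrading it via $\phi'(u)w\in L^2(\Omega)$ and elliptic regularity to $w\in H^2(\Omega)$ --- is a point the paper passes over silently, but it refines rather than changes the approach.
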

The converse can hold in general only if the state equation is linear. If $\phi$ is linear (affine), and if the pair $(u, v)\in H^1_0(\Omega)\times L^2(\Omega)$ is such that for the unique solution $w$ of \eqref{deebil}, the two previous relations in the statement hold, then $(u, v)$ is the unique solution of the corresponding optimal control problem.

\section{The numerical simulation}
The previous computations can be put directly into an iterative descent approximation strategy. We describe this for the sake of illustration, not pretending to go into a rigorous discussion.
If we go back to the expression
$$
\left.\frac{d}{d\epsilon}I[(u, v)+\epsilon(U, V)]\right|_{\epsilon=0}=
\int_\Omega[(u-\overline u)U+\mu vV+\lambda(Vw-\phi'(u)Uw-\nabla U\cdot\nabla w)]\,d\bx,
$$
to find the steepest descent directions $U$ and $V$, we need to solve the two variational problems
$$
\hbox{Minimize in }U\in H^1_0(\Omega):\quad
\int_\Omega\left[\frac12|\nabla U|^2+(u-\overline u)U-\lambda\phi'(u)wU-\lambda\nabla w\cdot\nabla U\right]\,d\bx,
$$
and
$$
\hbox{Minimize in }V\in L^2(\Omega):\quad
\int_\Omega\left[\frac12|V|^2+(\mu v+\lambda w)V\right]\,d\bx.
$$
Their respective solutions are easily found to be
$$
-\dv[\nabla U-\lambda\nabla w]+(u-\overline u)-\lambda\phi'(u)w=0\hbox{ in }\Omega,\quad U=0\hbox{ on }\partial\Omega,
$$
and
$$
V+\mu v+\lambda w=0\hbox{ in }\Omega.
$$
We finally decide on the step size to be taken. To this aim, suppose we choose the direction $(U, V)$, solutions of these last two problems, and then try to determine the value of $\epsilon$ minimizing the function
\begin{align}
g(\epsilon)=&I[(u, v)+\epsilon(U, V)]\nonumber\\
=&\int_\Omega\left[\frac12|u+\epsilon U-\overline u|^2+
\frac\mu2|v+\epsilon V|^2+\frac\lambda2|\nabla w+\epsilon\nabla W|^2\right]\,d\bx.\nonumber
\end{align}
It is elementary to have
$$
\epsilon=-\frac{\int_\Omega\left[(u-\overline u)U+\mu vV+\lambda\nabla w\cdot\nabla W\right]\,d\bx}{\int_\Omega(U^2+\mu V^2+\lambda|\nabla W|^2)\,d\bx}.
$$
Note, however, that this formula furnishes just an approximation since the function $g(\epsilon)$ is not quadratic because the perturbation $W$ would also depend on $\epsilon$ when $\epsilon>0$. Note that equation \eqref{perture} is written with $W$ independent of $\epsilon$ as a first-order approximation in $\epsilon$.
In this way, we implement an iterative procedure in several steps:
\begin{enumerate}
\item Initialization. Take $u_0=v_0=0$, for instance.
\item Main iterative step until convergence. Suppose we know $(u_j, v_j)\in H^1_0(\Omega)\times L^2(\Omega)$. 
\begin{enumerate}
\item Solve successively for
\begin{gather}
w_j:\quad -\dv(\nabla u_j+\nabla w_j)+\phi(u_j)=v_j\hbox{ in }\Omega,\quad w_j=0\hbox{ on }\partial\Omega;\nonumber\\
U_j:\quad -\dv(\nabla U_j-\lambda\nabla w_j)+u_j-\overline u-\lambda \phi'(u_j)w_j=0\hbox{ in }\Omega,\quad U_j=0\hbox{ on }\partial\Omega;\nonumber\\
V_j:\quad V_j=-\mu v_j-\lambda w_j\hbox{ in }\Omega\nonumber.
\end{gather}
\item Stopping criterium. If the norm
$$
\|(U_j, V_j)\|^2=\int_\Omega\left(|\nabla U_j|^2+V^2_j\right)\,d\bx
$$
is sufficiently small, stop and take $(u_j, v_j)$ as a good approximation of the optimal pair. If not, proceed.
\item Solve the additional problem
$$
W_j:\quad -\dv(\nabla U_j+\nabla W_j)+\phi'(u_j)U_j=V_j\hbox{ in }\Omega,\quad W_j=0\hbox{ on }\partial\Omega,
$$
and compute the number
$$
\epsilon_j=-\frac{\int_\Omega\left[(u_j-\overline u)U_j+\mu v_jV_j+\lambda\nabla w_j\cdot\nabla W_j\right]\,d\bx}{\int_\Omega(U_j^2+\mu V_j^2+\lambda|\nabla W_j|^2)\,d\bx}.
$$ 
Note that this number is positive, because the function $g(\epsilon)$ above, if we regard $W$ as independent of $\epsilon$ as has been calculated in this step, is a convex, quadratic function with a negative derivative for $\epsilon=0$ if the pair $(U_j, V_j)$ is a descent direction.
\item Update rule. Set
$$
u_{j+1}=u_j+\epsilon_j U_j,\quad v_{j+1}=v_j+\epsilon_j V_j,
$$
and proceed iteratively. 
\end{enumerate}
\end{enumerate}

\begin{figure}[b]
\includegraphics[scale=0.3]{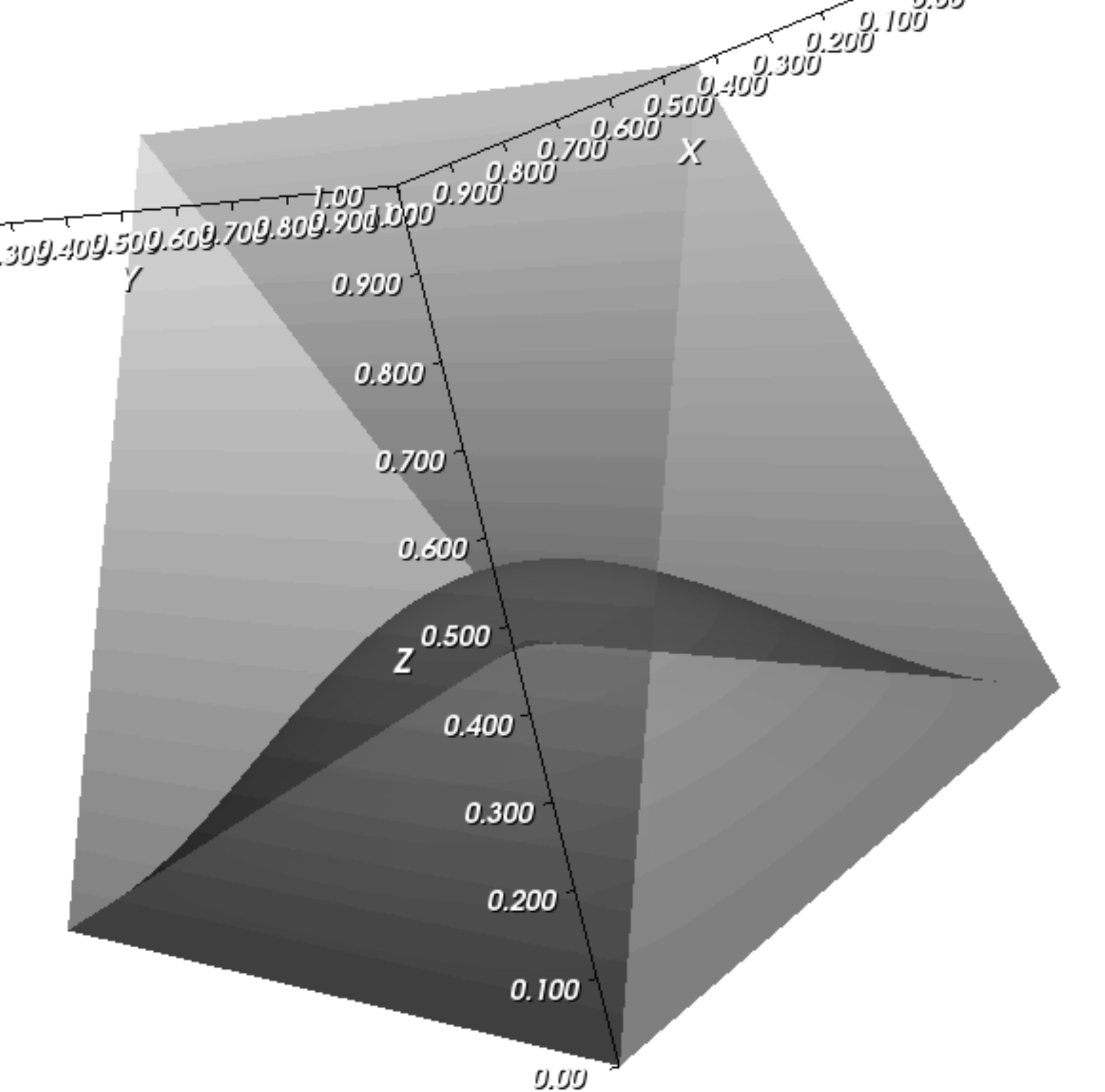}\quad
\includegraphics[scale=0.3]{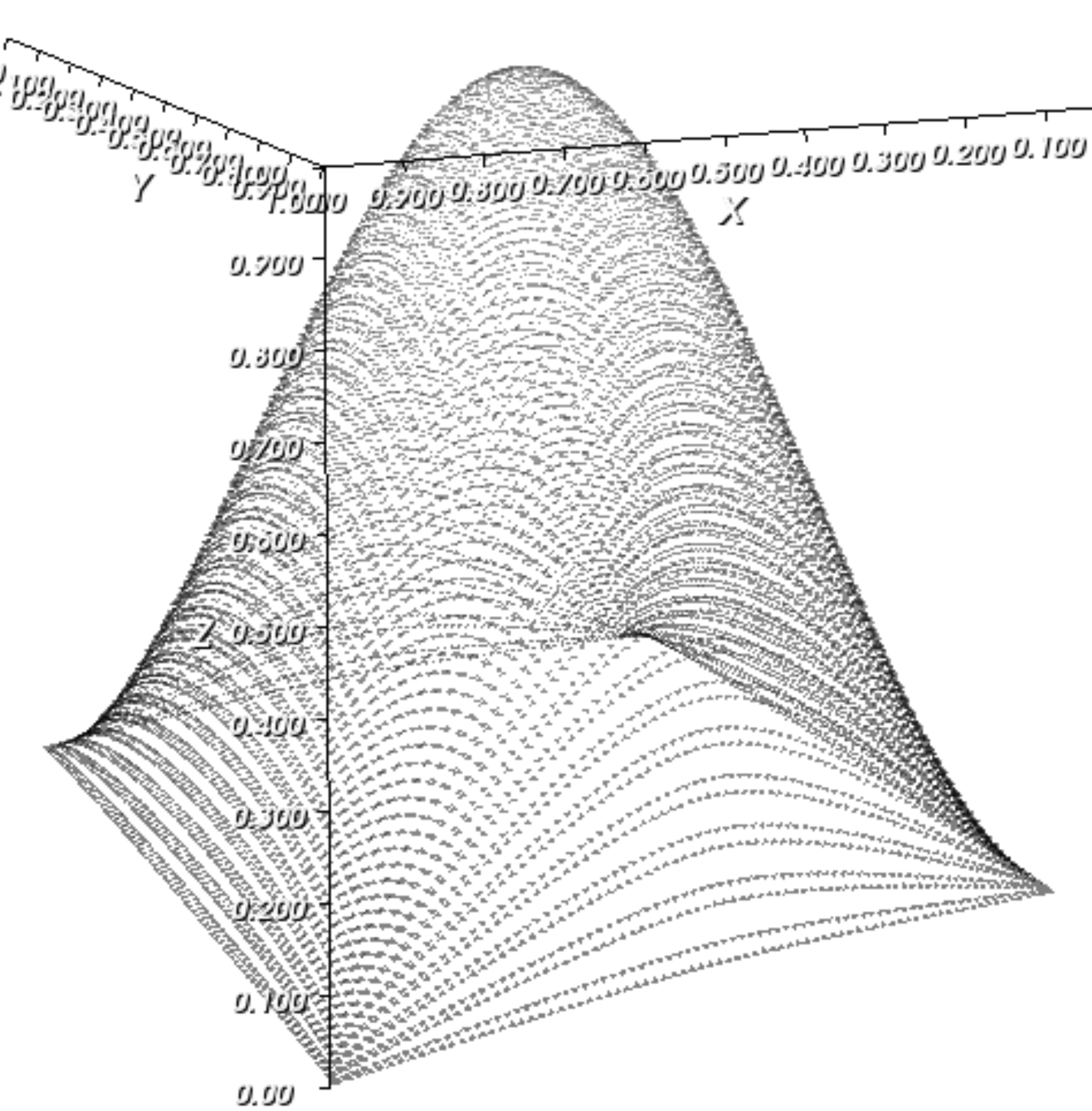}
\caption{A comparison between the target and the optimal profile (left) for $\mu=0.01$. The control $v$ on the right.}
\label{uno}       
\end{figure}

\begin{figure}[b]
\includegraphics[scale=0.29]{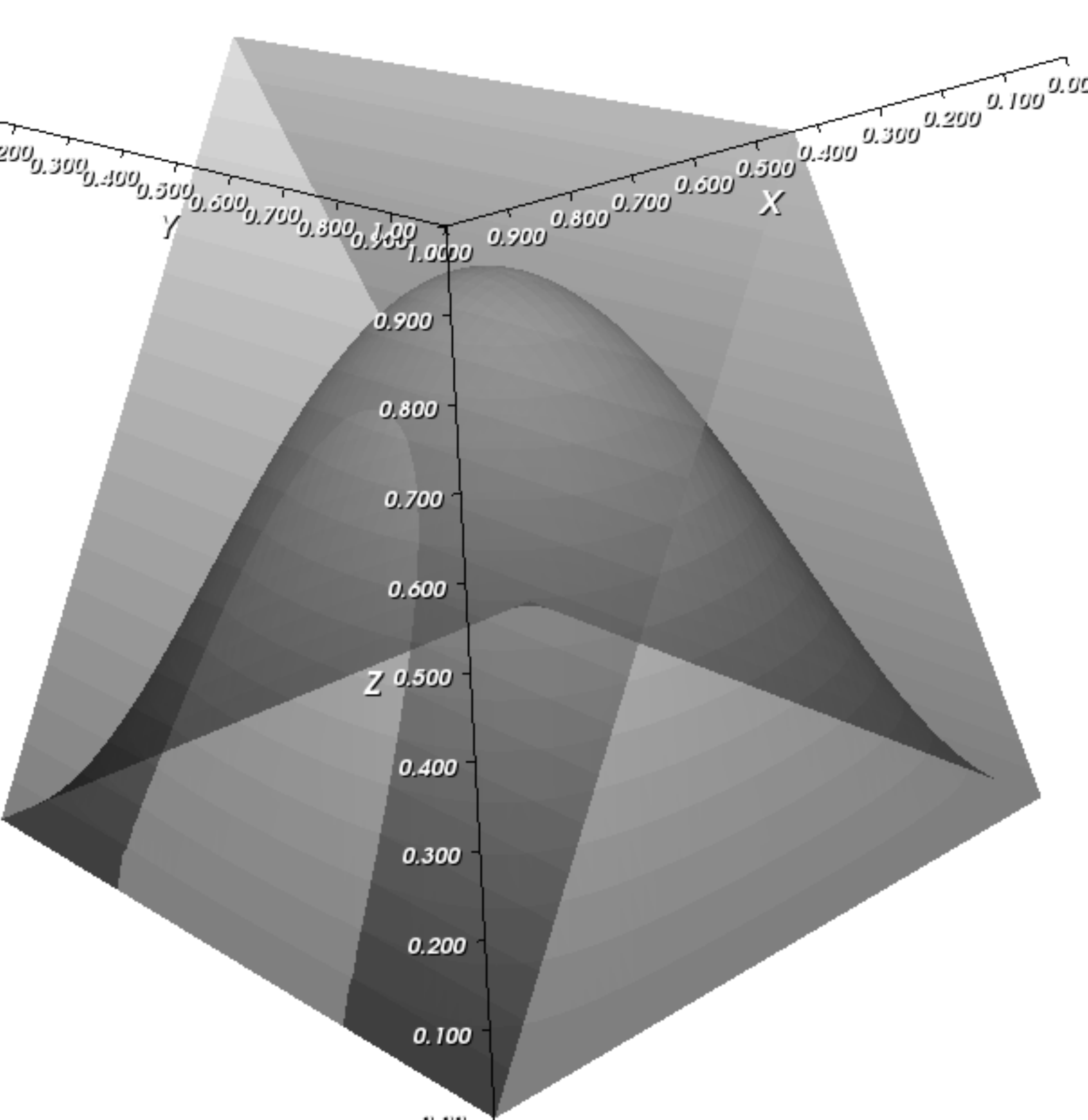}\quad
\includegraphics[scale=0.29]{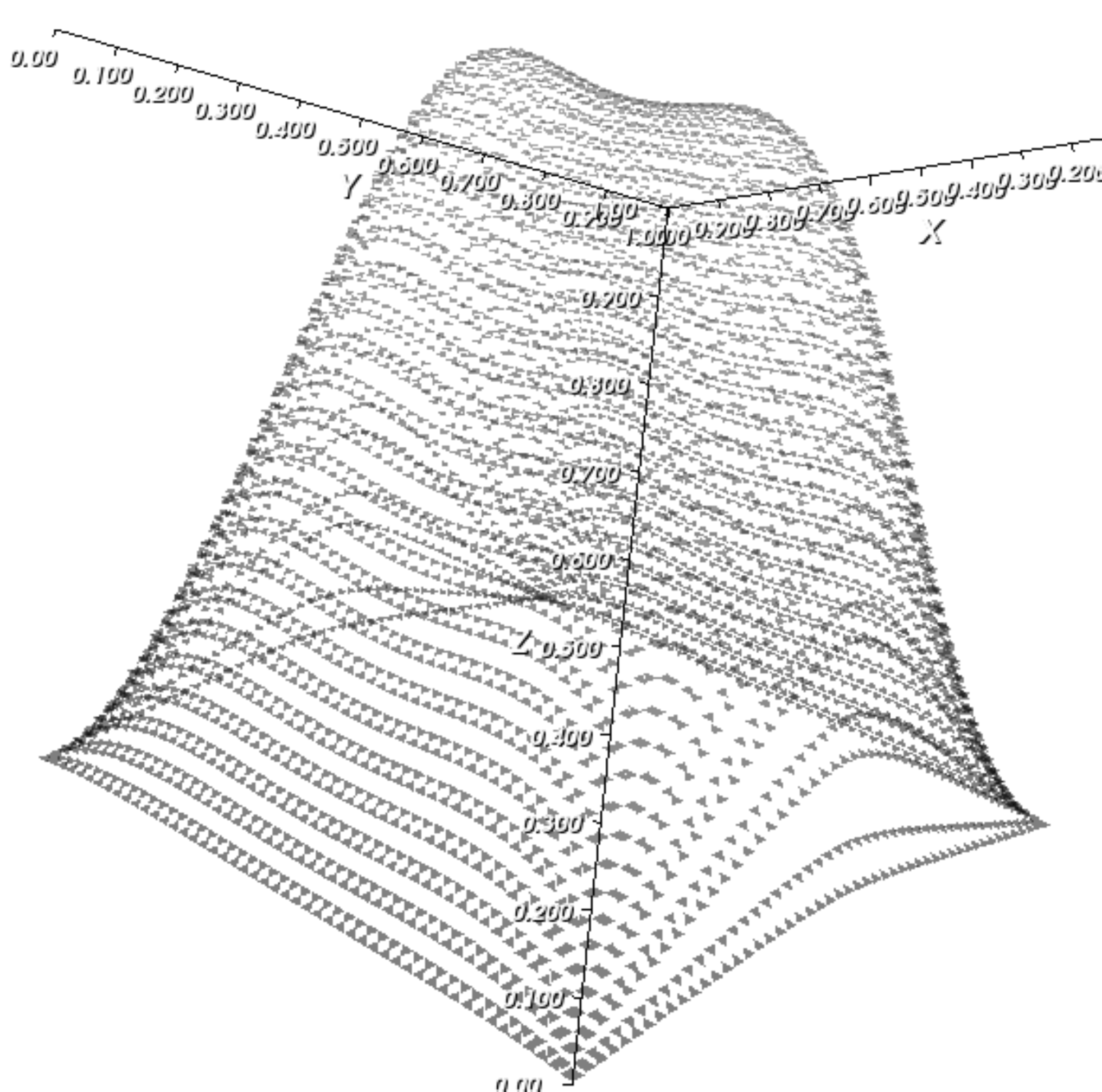}
\caption{A comparison between the target and the optimal profile (left) for $\mu=0.001$. The control $v$ on the right.}
\label{dos}       
\end{figure}

\begin{figure}[b]
\includegraphics[scale=0.29]{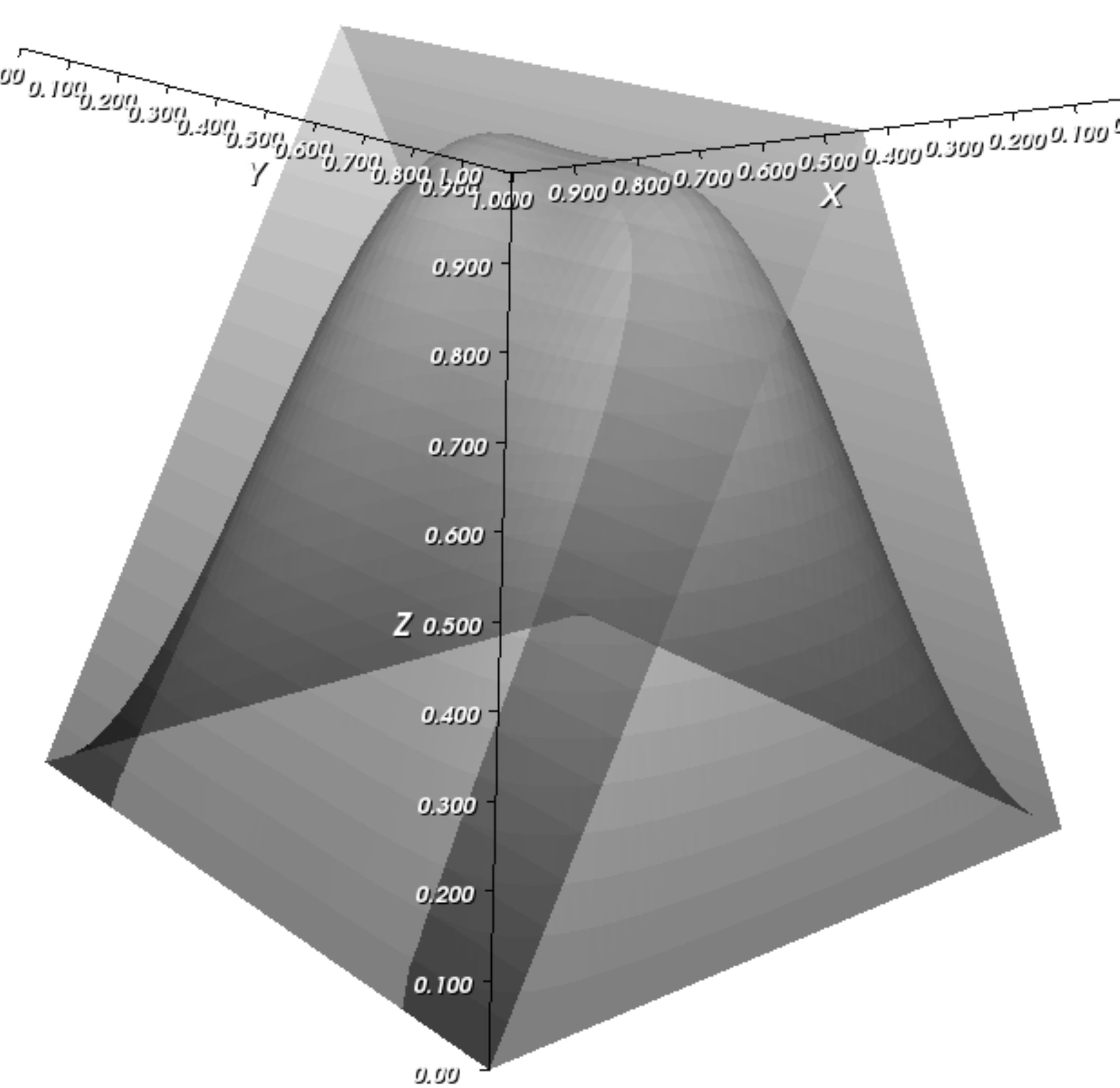}\quad
\includegraphics[scale=0.29]{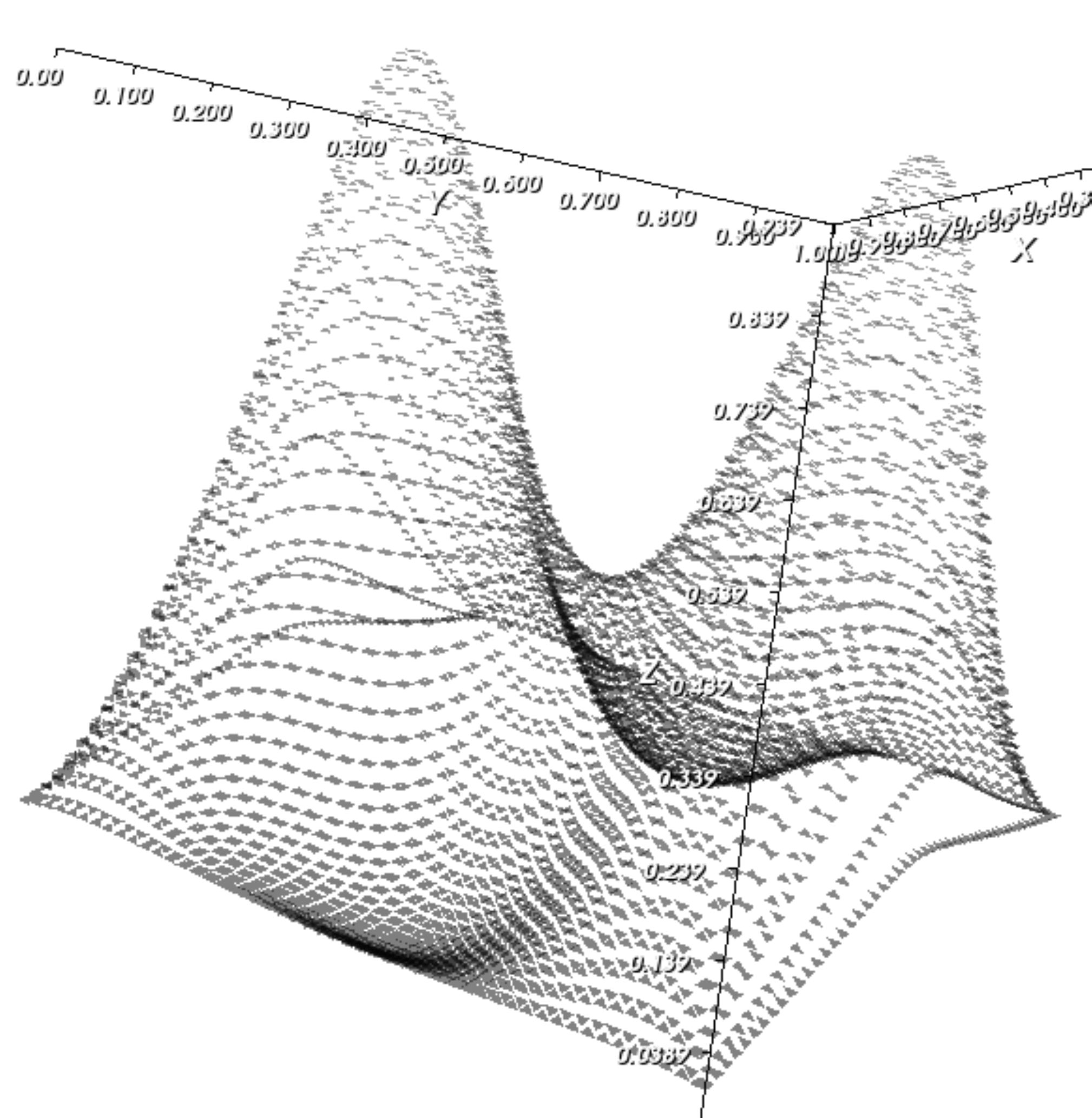}
\caption{A comparison between the target and the optimal profile (left) for $\mu=0.0001$. The control $v$ on the right.}
\label{tres}       
\end{figure}

To check the performance of the algorithm, we will start by treating a classic, well-known, linear situation in which we specifically take $\Omega=(0, 1)\times(0, 1)$, $\lambda=1$, $\phi(u)=-1$, $\overline u(x, y)=\min(x, 1-x)$, and play with decreasing values for $\mu$. 
Note that the existence hypotheses of Theorem \ref{existencia} are clearly fulfilled for $l(u)=-1$. 

To solve the various linear PDE problems involved, a software like FreeFem++ (\cite{freefem}) looks like an ideal choice for the non-expert. We show the numerical results for the three values $\mu=1.e-02$ (Figure \ref{uno}), $\mu=1.e-03$ (Figure \ref{dos}), and $\mu=1.e-04$ (Figure \ref{tres}, error=0.0061237 ($L^2$-norm of the gradient of the residual $w$)). Figure \ref{tres-dos} contains a level-curve map for the residual function for $\mu=0.0001$.

\begin{figure}[b]
\includegraphics[scale=0.35]{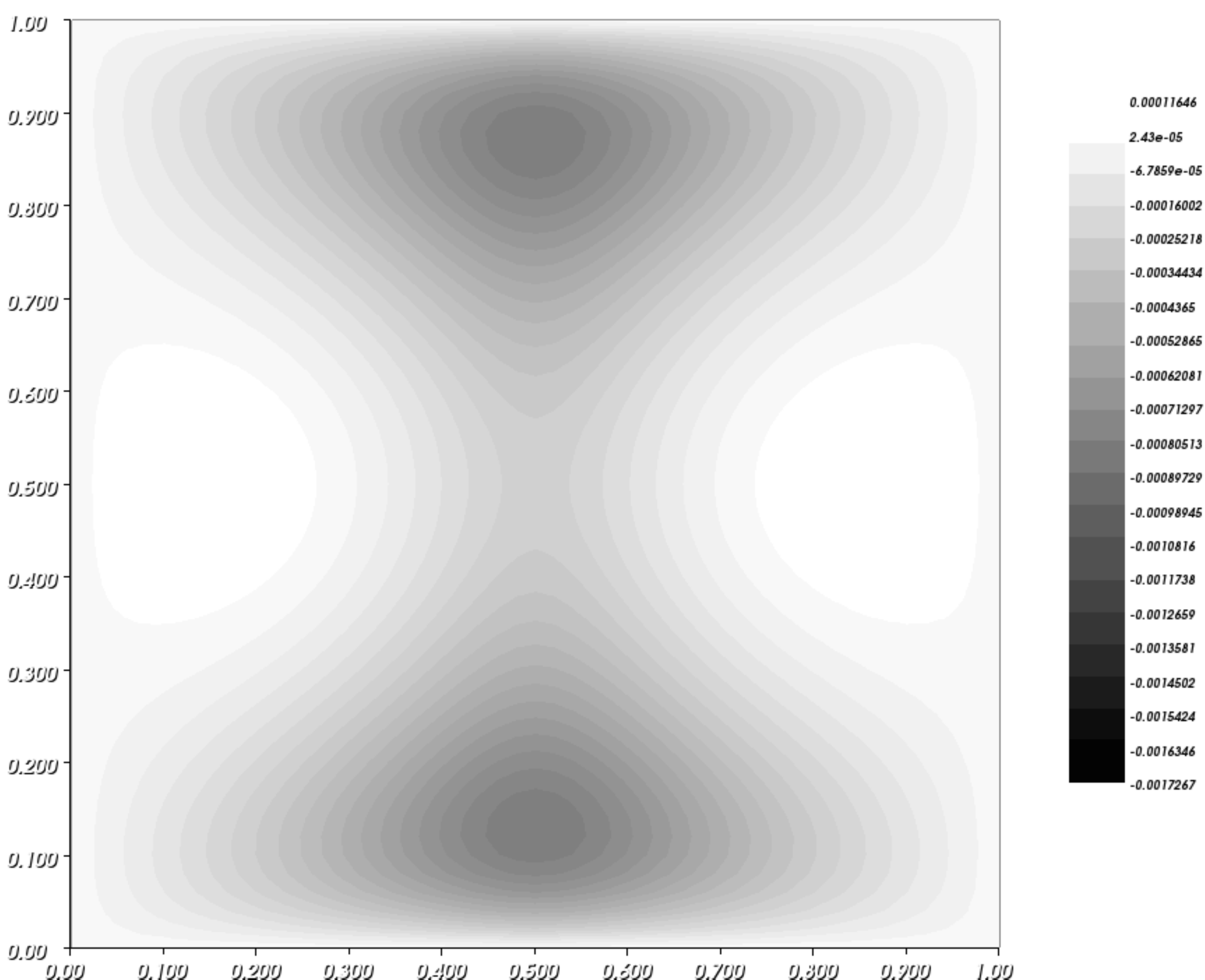}
\caption{The contour lines for the residual for $\mu=0.0001$. The size ($L^2$-norm of the gradient) is 0.0061237.}
\label{tres-dos}       
\end{figure}

We next treat the same situations with the same values of the parameters but for the non-linearity $\phi(u)=(u-2)^3$. 
Again the existence hypotheses of Theorem \ref{existencia} are correct since the four-degree polynomial $-u(u-2)^3$ is clearly bounded from above by some fixed constant.

The results of the simulations can be seen in Figure \ref{cuatro} for the value $\mu=1.e-02$, and error=0.010073; in Figure \ref{cinco} for $\mu=1.e-03$ with error=0.0083419; and in Figure \ref{seis} for $\mu=1.e-04$ and error=0.00541891.  Again Figure \ref{seis-dos} depicts a level-curve map for the residual function $w$. 

\begin{figure}[b]
\includegraphics[scale=0.3]{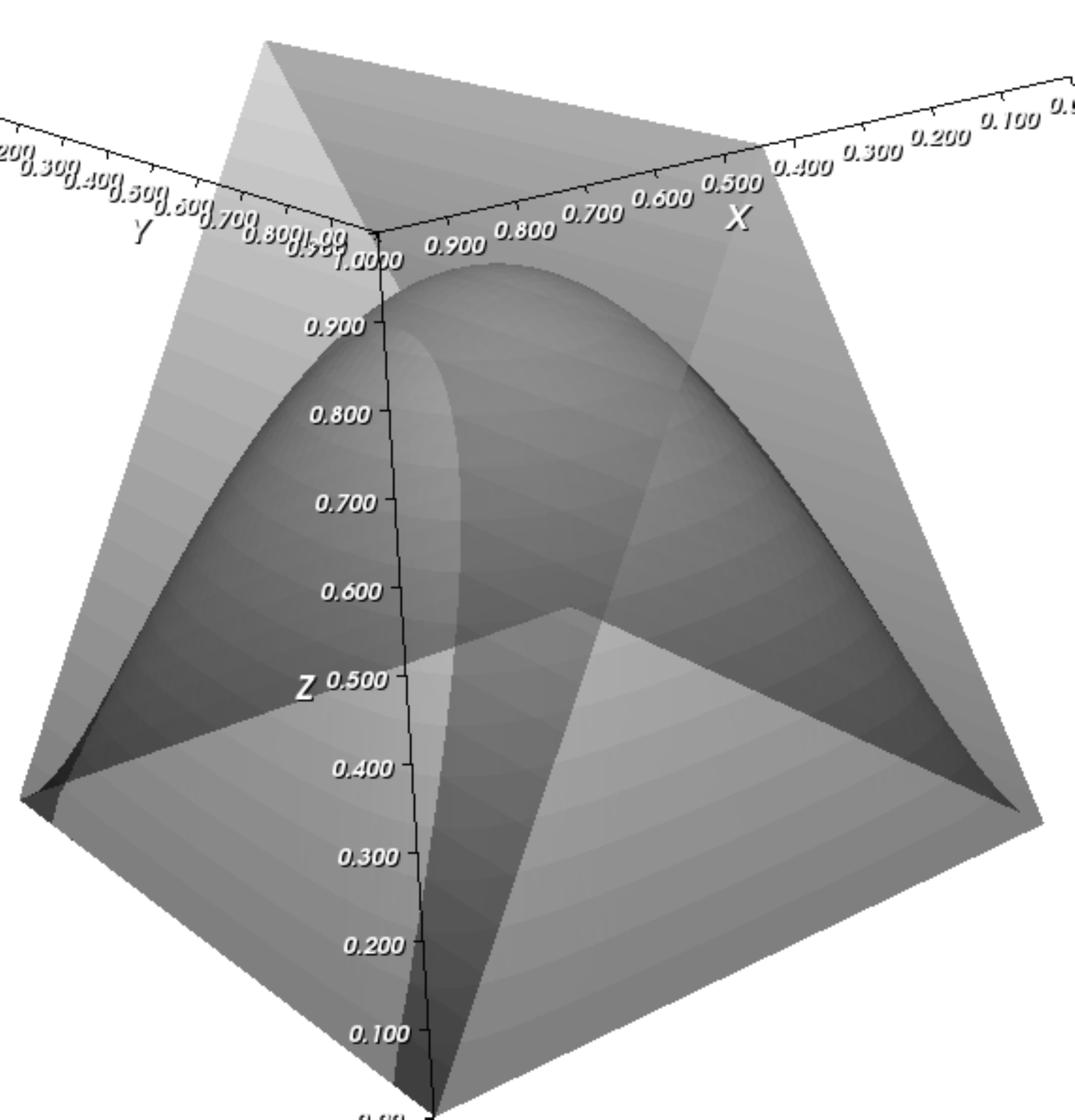}\quad
\includegraphics[scale=0.3]{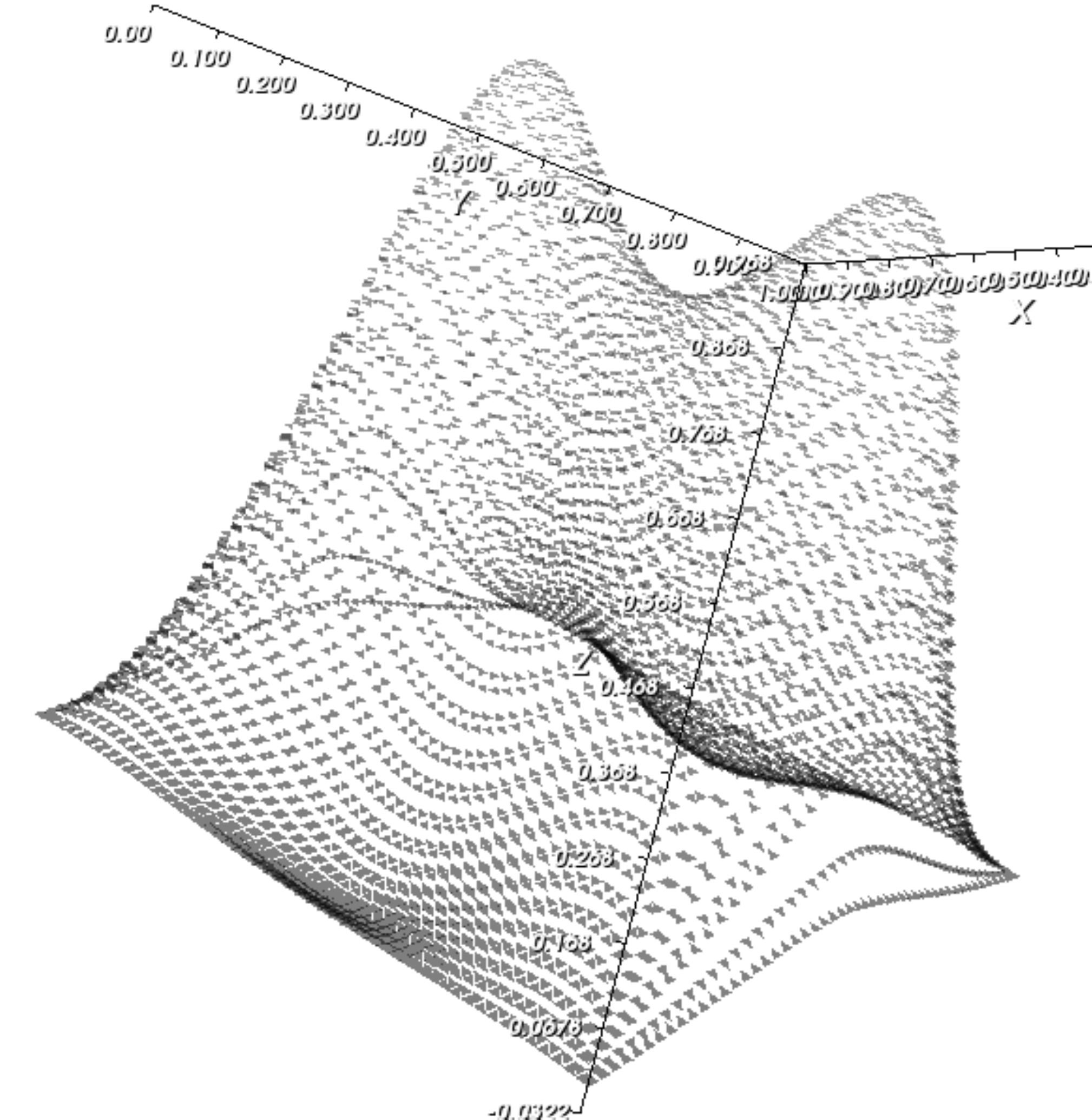}
\caption{A comparison between the target and the optimal profile (left) for the non-linear problem and $\mu=0.01$. The control $v$ on the right.}
\label{cuatro}       
\end{figure}

\begin{figure}[b]
\includegraphics[scale=0.29]{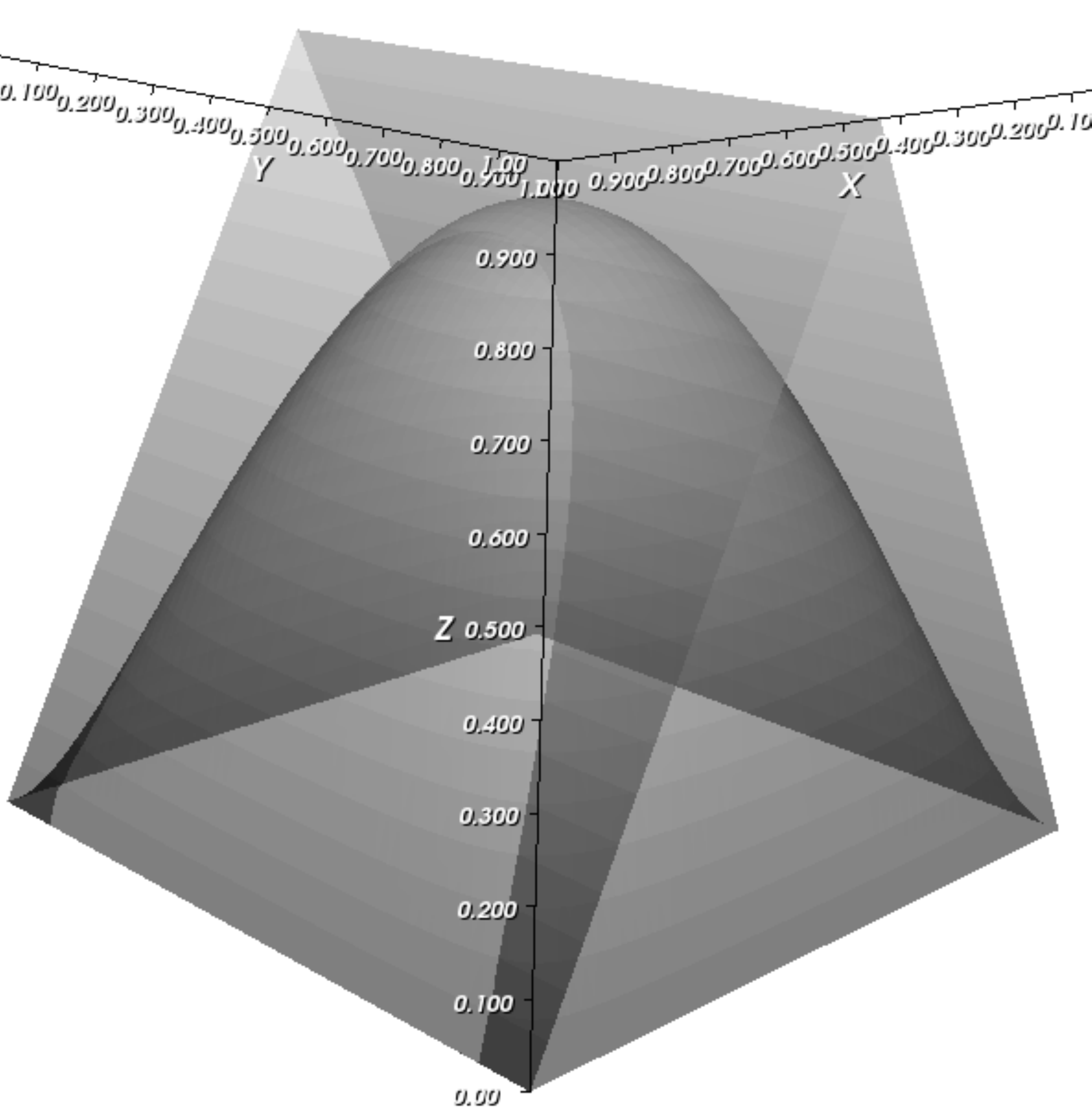}\quad
\includegraphics[scale=0.29]{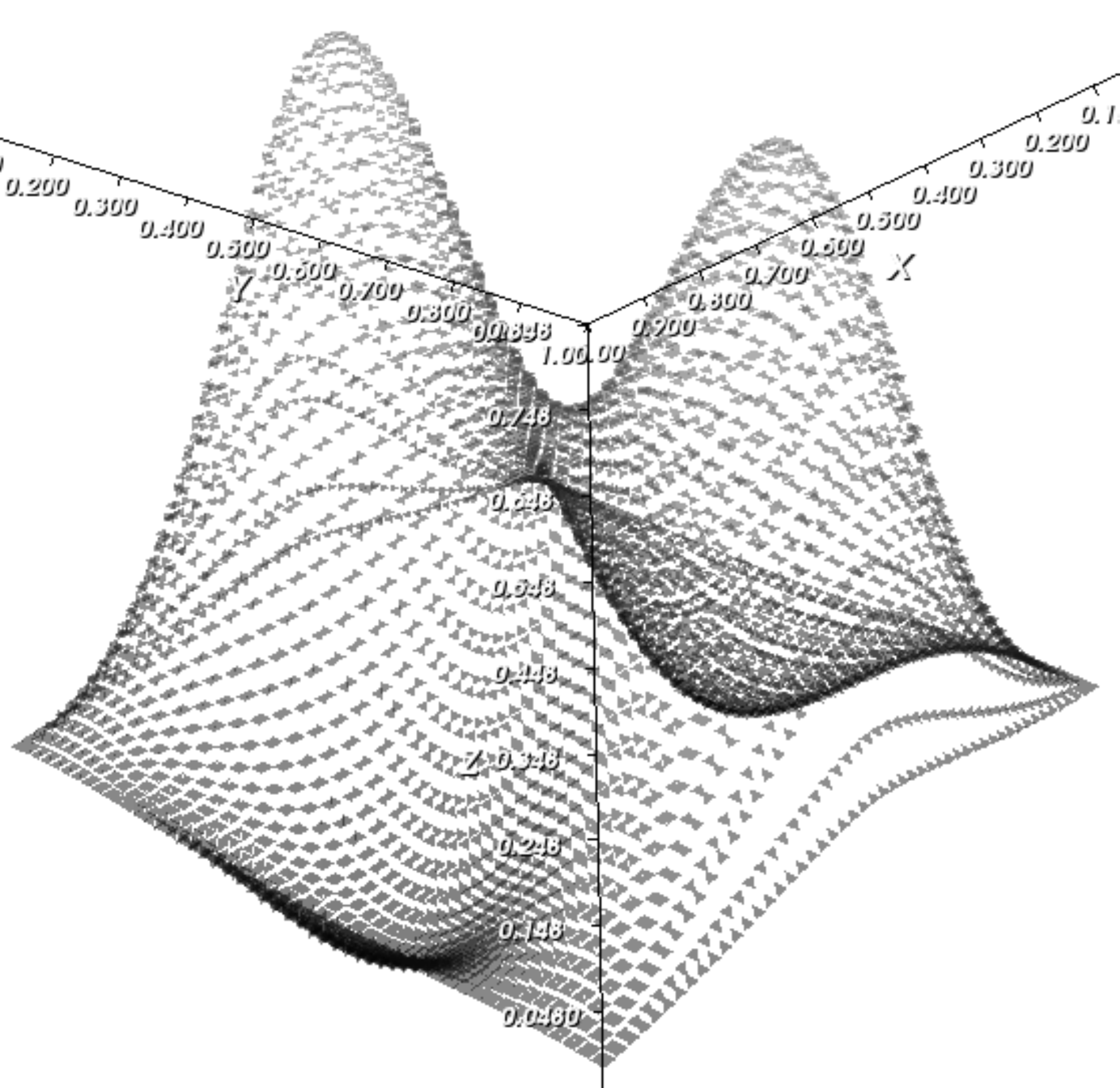}
\caption{A comparison between the target and the optimal profile (left) for the non-linear problem and $\mu=0.001$. The control $v$ on the right.}
\label{cinco}       
\end{figure}

\begin{figure}[b]
\includegraphics[scale=0.29]{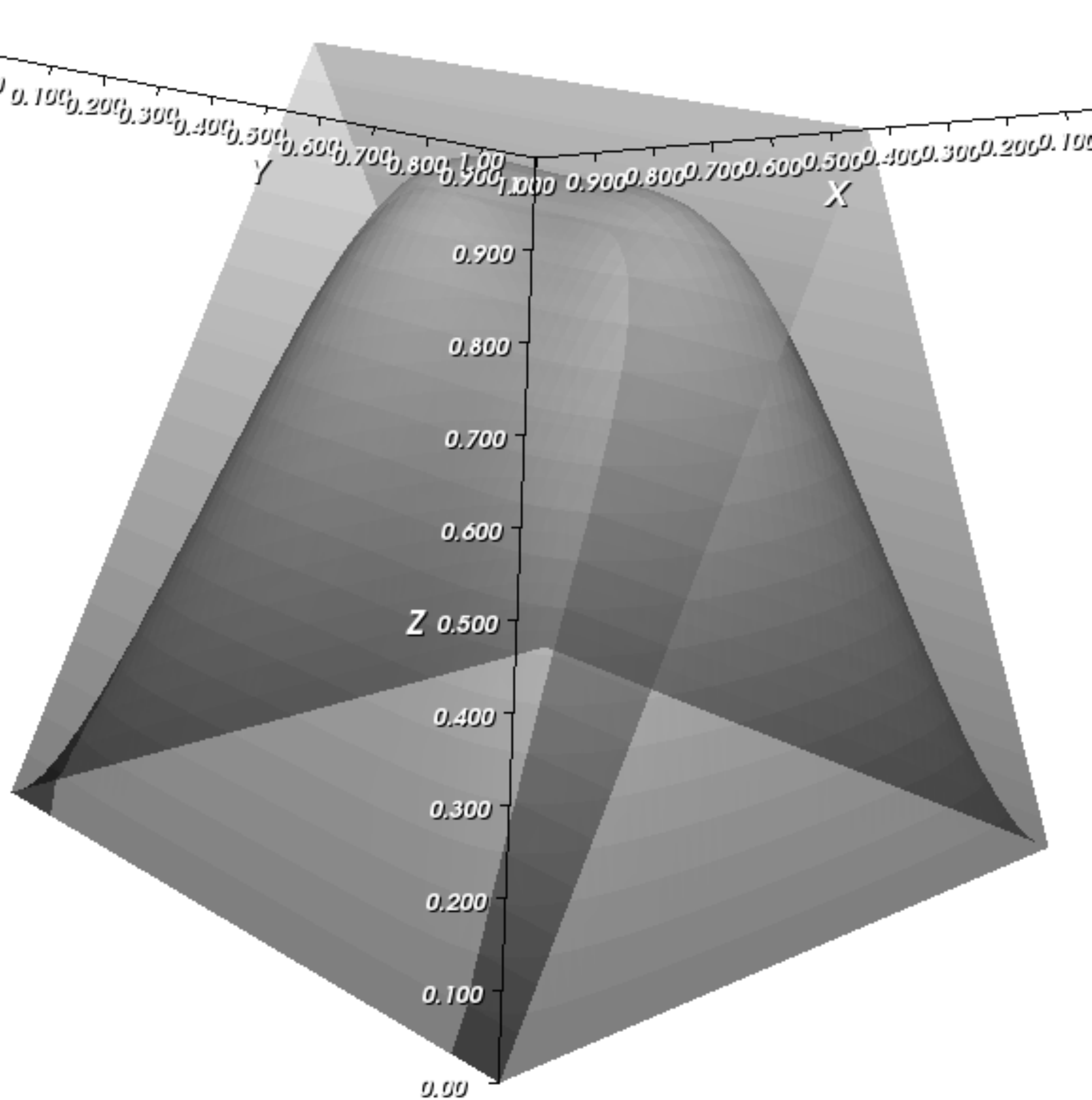}\quad
\includegraphics[scale=0.29]{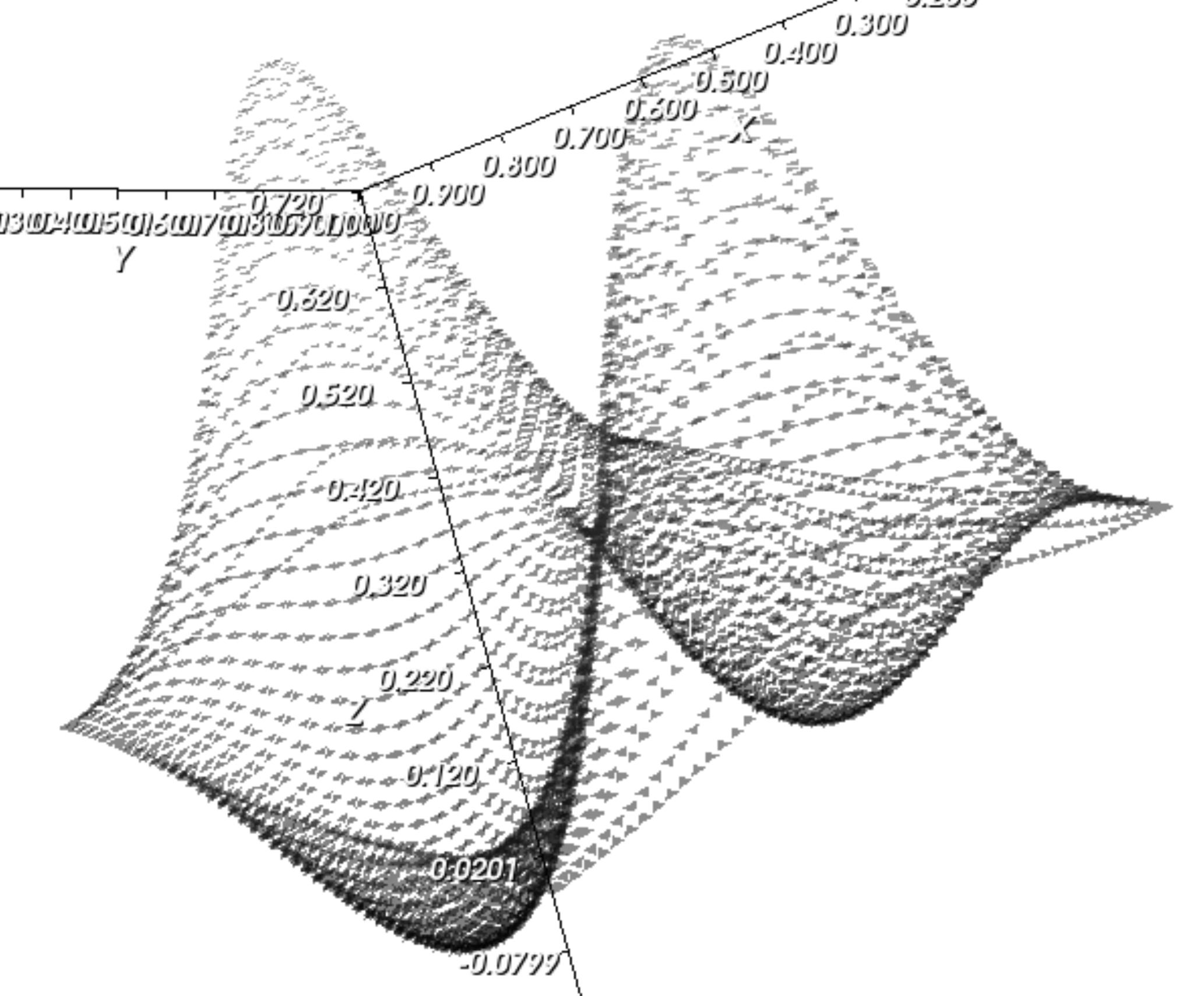}
\caption{A comparison between the target and the optimal profile (left) for the non-linear problem and $\mu=0.0001$. The control $v$ on the right.}
\label{seis}       
\end{figure}

\begin{figure}[b]
\includegraphics[scale=0.35]{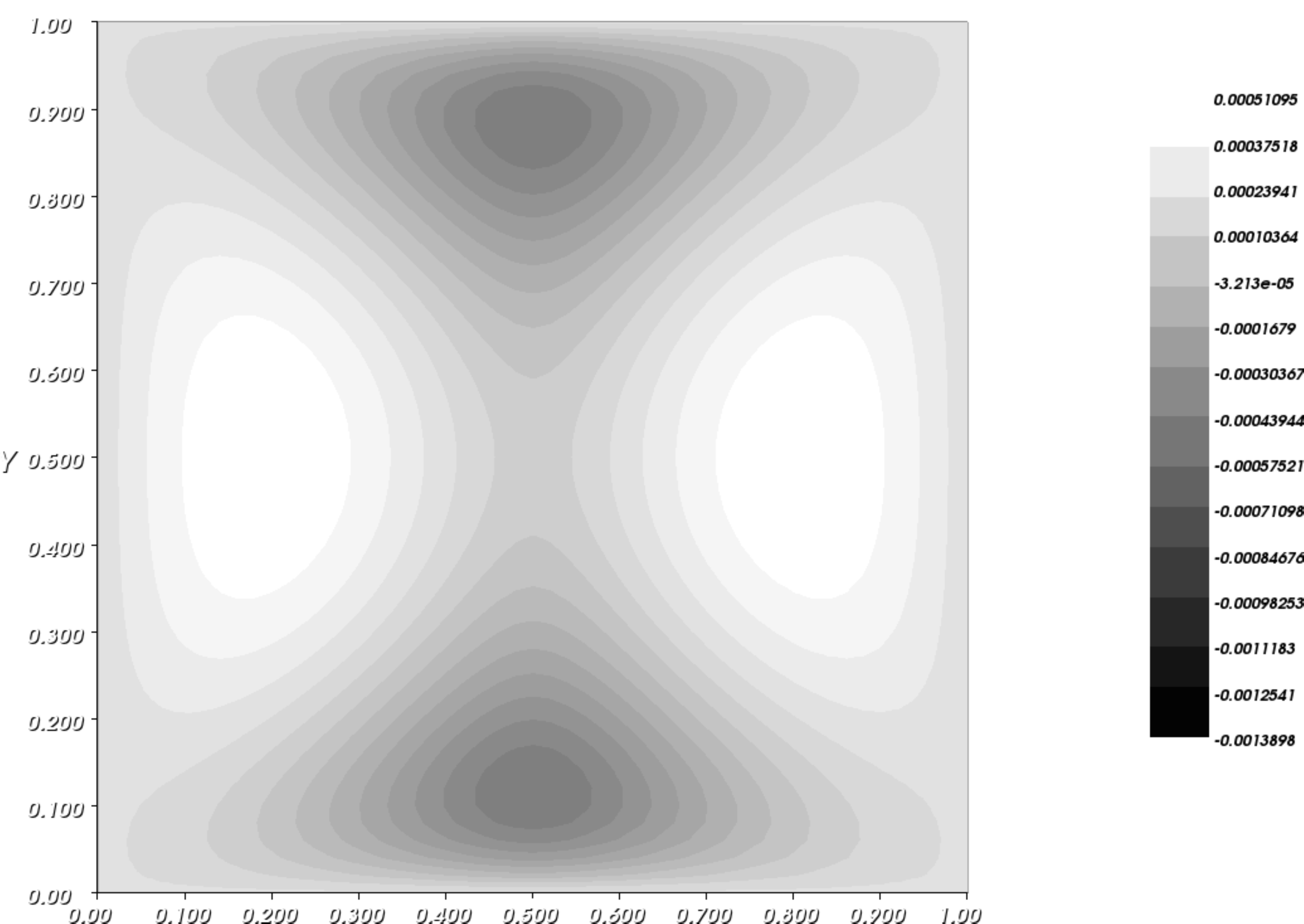}
\caption{The contour lines for the residual for $\mu=0.0001$.}
\label{seis-dos}       
\end{figure}

\section{State constraints}\label{cuatro}
We now change our model problem to
\begin{equation}\label{restest}
\hbox{Minimize in }(u, v)\in\A:\quad \int_\Omega\left(\frac12|u(\bx)-\overline u(\bx)|^2+\frac{\lambda}2|\nabla w(\bx)|^2\right)\,d\bx
\end{equation}
subject to
$$
-\dv[\nabla u(\bx)+\nabla w(\bx)]=v(\bx)\hbox{ in }\Omega,\quad
w=0\hbox{ on }\partial\Omega,
$$
and
$$
\A=\{(u, v)\in H^1_0(\Omega)\times L^\infty(\Omega): u(\bx)\le0, v_-(\bx)\le v(\bx)\le v_+(\bx)\}.
$$
The functions $v_-$ and $v_+$ are fixed $L^\infty(\Omega)$-functions such that $v_-(\bx)\le v_+(\bx)$ for a.e. $\bx\in\Omega$. Note that, by putting $v_-(\bx)=v_+(\bx)=0$ for points $\bx$ in a certain subset of $\Omega$, we can localize the effect of the control. 

Since in this situation, we are to enforce convex, pointwise constraints for both sets of variables, state and control, but in a linear fashion, the existence theorem Theorem \ref{existencia} is not compromised in the least, and we still have a unique optimal pair $(u, v)\in\A$.  
\begin{proposition}
Under the hypotheses indicated, there is a unique optimal pair $(u, v)\in\A$ for problem \eqref{restest}.
\end{proposition}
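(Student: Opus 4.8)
The plan is to run the direct method exactly as in the proof of Theorem \ref{existencia}, adapting it to the linear state law and the convex constraints, and then obtain uniqueness from strict convexity. Since the functional is non-negative, take a minimizing sequence $\{(u_j, v_j)\}\subset\A$ and let $w_j\in H^1_0(\Omega)$ be the associated defects solving $-\dv[\nabla u_j+\nabla w_j]=v_j$. The bounds $v_-\le v_j\le v_+$ with $v_\pm\in L^\infty(\Omega)$ bound $\{v_j\}$ uniformly in $L^\infty(\Omega)$, hence in $L^2(\Omega)$; the functional bounds $\{u_j\}$ in $L^2(\Omega)$ and $\{w_j\}$ in $H^1_0(\Omega)$. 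To upgrade the bound on $u_j$ to $H^1_0(\Omega)$ I would test the (now purely linear) state equation with $u_j$ itself, obtaining
$$
\|\nabla u_j\|_{L^2(\Omega)}^2=\int_\Omega v_ju_j\,d\bx-\int_\Omega\nabla w_j\cdot\nabla u_j\,d\bx\le\|v_j\|_{L^2(\Omega)}\|u_j\|_{L^2(\Omega)}+\|\nabla w_j\|_{L^2(\Omega)}\|\nabla u_j\|_{L^2(\Omega)},
$$
which is the same quadratic inequality $x^2\le ax+b$ with uniformly bounded coefficients already used in Theorem \ref{existencia}, here even simpler since no nonlinear term is present. This yields a uniform $H^1_0(\Omega)$ bound on $\{u_j\}$.

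Passing to a subsequence, assume $u_j\rightharpoonup u$ in $H^1_0(\Omega)$, $v_j\rightharpoonup v$ in $L^2(\Omega)$, and $w_j\rightharpoonup w$ in $H^1_0(\Omega)$. Because the state equation is linear, passing to the limit against any $\theta\in H^1_0(\Omega)$ is immediate and identifies $w$ as the defect of $(u,v)$. The genuinely new ingredient compared with the unconstrained case is the admissible set: I would check that $\A$ is convex and strongly closed in $H^1_0(\Omega)\times L^2(\Omega)$ (the constraint $u\le0$ defines a closed convex cone in $H^1_0(\Omega)$, and $v_-\le v\le v_+$ defines a closed convex subset of $L^2(\Omega)$), hence weakly closed; therefore the weak limit $(u,v)$ still satisfies $u\le0$ and $v_-\le v\le v_+$ a.e., the latter keeping $v\in L^\infty(\Omega)$ automatically. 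Convexity of the integrand in $(u,\nabla w)$ then gives weak lower semicontinuity, $I[u,v]\le\liminf_j I[u_j,v_j]$, so $(u,v)\in\A$ is optimal.

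For uniqueness I would argue strict convexity of $I$ on $\A$. The map $(u,v)\mapsto w$ is linear, so $I$ is a sum of two convex terms in $(u,v)$. The subtle point — and the one I expect to be the main obstacle — is that, unlike the functional of Theorem \ref{existencia}, this cost contains no term directly penalizing $v$, so strict convexity in the $v$-direction must be recovered from the dependence of $w$ on $v$. The key observation is the following dichotomy for two distinct admissible pairs $(u_1,v_1)\ne(u_2,v_2)$: if $u_1\ne u_2$, the strictly convex term $\tfrac12|u-\overline u|^2$ already produces strict inequality along the segment joining them; if instead $u_1=u_2$ (forcing $v_1\ne v_2$), then subtracting the two state equations gives $-\dv[\nabla(w_1-w_2)]=v_1-v_2\ne0$, so $\nabla w_1\ne\nabla w_2$, and the term $\tfrac\lambda2|\nabla w|^2$ is then strictly convex along that segment.

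Either way, $I$ is strictly convex along every nontrivial segment contained in the convex set $\A$, and a strictly convex functional admits at most one minimizer on a convex set. Combined with the existence just established, this yields the unique optimal pair $(u,v)\in\A$ asserted in the statement.
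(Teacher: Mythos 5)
Your proof is correct and takes essentially the same route as the paper: coercivity obtained by testing the linear state equation with $u$ itself (the same quadratic inequality $x^2\le ax+b$ as in Theorem \ref{existencia}), weak lower semicontinuity over the convex, weakly closed set $\A$, and uniqueness from strict convexity via the linearity of the map $(u,v)\mapsto w$. Your dichotomy argument for strict convexity --- recovering strict convexity in the $v$-direction from the term $\frac\lambda2|\nabla w(\bx)|^2$, since the cost in \eqref{restest} contains no explicit penalization of $v$ --- is in fact a more careful justification of the paper's terse assertion that strict convexity ``comes directly from the linearity of the state equation, and the strict convexity of $I$ itself.''
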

\begin{proof}
For the proof, simply notice that we are dealing with a strictly-convex, coercive, quadratic functional over a convex set of competing functions. 
By using $u$ itself as a test function in the state equation and following the same estimates as in the case without constraints, one finds that
$$
\|\nabla u\|_{L^2(\Omega)}\le \|\nabla w\|_{L^2(\Omega)}+M\le 
\sqrt{\frac{2 I[u, v]}\lambda}+M
$$
where this time $M$ is a constant depending on the $L^\infty$-norm of $v_-$ and $v_+$, as well as in Poincar\'e's inequality. This estimate, together with the uniform bound for feasible functions $v$, imply that $I$ is coercive in $\A$. 

The strict convexity comes directly from the linearity of the state equation, and the strict convexity of $I$ itself. 

The existence of a unique solution is then a classical result. 
\end{proof}
Optimality can then be expressed in the form of variational inequalities by considering perturbations
$$
u\mapsto u+\epsilon(U-u),\quad v\mapsto v+\epsilon(V-v)
$$
for arbitrary pairs $(U, V)\in\A$, and demand one-sided conditions. Computations are similar to the previous situation by simply replacing $U$ by $U-u$, and $V$ by $V-v$,
namely
\begin{gather}
\left.\frac{d}{d\epsilon}I[(u, v)+\epsilon(U-u, V-v)]\right|_{\epsilon=0}=\nonumber\\
\int_\Omega[(u-\overline u)(U-u)+\lambda((V-v)w-(\nabla U-\nabla u)\cdot\nabla w)]\,d\bx\ge0,\nonumber
\end{gather}
for every such pair $(U, V)\in\A$. Note that this time the equation
$$
-\dv(\nabla U-\nabla u+\nabla W)=V-v
$$
furnishes the variation $W$ on $w$ produced by the perturbations on $u$ and $v$ given above. 
We can reinterpret this family of inequalities as follows. 
If we do not perturb $u$ and take $U=u$, then it is elementary to realize that we can put
$$
V=\frac{v_++v_-}2+\sgn{w}\frac{v_+-v_-}2.
$$
Conversely, if we do not pertub $v$ and take $V=v$, then the following statement yields, in a specific way, how to select $U$. Recall that $w$ is the defect associated with the feasible pair $(u, v)$ according to the state law right after \eqref{restest}. 

\begin{lemma}
For given $u$, the unique solution $U$ of the obstacle problem
$$
\hbox{Minimize in }U\in H^1_0(\Omega):\quad \int_\Omega\left(\frac12|\nabla u-\nabla U|^2+(u-\overline u)U-\lambda\nabla w\cdot\nabla U\right)\,d\bx
$$
under $U\le0$, is a descent direction of $I$ at $u$. 
\end{lemma}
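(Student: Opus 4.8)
The plan is to verify that the solution $U$ of the stated obstacle problem satisfies the one-sided optimality inequality, i.e.\ that $\langle I'[u,v],(U-u,0)\rangle\le 0$, so that $U-u$ genuinely decreases the cost. First I would write out the variational inequality characterizing the minimizer $U$ of the obstacle problem over the convex set $\{U\in H^1_0(\Omega): U\le0\}$. Differentiating the obstacle functional at its minimizer $U$ in the admissible direction $Z-U$ for any feasible $Z\le0$ gives
\begin{equation}\label{obsvi}
\int_\Omega\left[(\nabla U-\nabla u)\cdot(\nabla Z-\nabla U)+(u-\overline u)(Z-U)-\lambda\nabla w\cdot(\nabla Z-\nabla U)\right]\,d\bx\ge0.
\end{equation}

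Next I would recall the target inequality coming from the optimality condition computed just above the Lemma: for the pair $(U,v)\in\A$ (note $U\le0$, so $(U,v)$ is feasible if $v$ already is), the relevant first-order quantity in the direction $(U-u,0)$ is
\begin{equation}\label{target}
\int_\Omega\left[(u-\overline u)(U-u)-\lambda(\nabla U-\nabla u)\cdot\nabla w\right]\,d\bx,
\end{equation}
and I want to show this is $\le0$ (or more precisely, that $U-u$ is a descent direction, so that moving from $u$ toward $U$ lowers $I$). The strategy is to test \eqref{obsvi} with the particular feasible choice $Z=u$ if $u\le0$, or, more robustly, to exploit that the minimizer $U$ of a strictly convex functional $J(U)$ satisfies $J(U)\le J(u)$ whenever $u$ is itself feasible; expanding $J(U)-J(u)\le0$ and completing the square in the quadratic term $\tfrac12\int_\Omega|\nabla u-\nabla U|^2$ isolates exactly the linear combination appearing in \eqref{target}, up to the manifestly non-negative square term, yielding the desired sign.

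Concretely, I would compute $J(U)-J(u)$ where $J$ is the obstacle functional; since the gradient part of $J$ is $\tfrac12\int_\Omega|\nabla u-\nabla U|^2\,d\bx$, one gets $J(U)-J(u)=\tfrac12\int_\Omega|\nabla u-\nabla U|^2\,d\bx+\int_\Omega\left[(u-\overline u)(U-u)-\lambda\nabla w\cdot(\nabla U-\nabla u)\right]\,d\bx\le0$, and because the first integral is non-negative, the bracketed integral, which is precisely \eqref{target}, must be $\le -\tfrac12\int_\Omega|\nabla u-\nabla U|^2\,d\bx\le0$, strictly negative unless $U=u$. This is exactly the statement that $U-u$ is a descent direction for $I$ at $(u,v)$ along the state variable.

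The main obstacle I anticipate is matching the obstacle-functional's first variation to the genuine derivative of $I$, since the latter involves the auxiliary variation $W$ solving $-\dv(\nabla U-\nabla u+\nabla W)=0$ and the term $\lambda\int_\Omega(V-v)w$, both of which must be correctly suppressed or accounted for when $V=v$. I would need to confirm that the elimination of $W$ via testing the $W$-equation with $w$ (exactly as in Section~\ref{dos}) reproduces the $-\lambda\int_\Omega(\nabla U-\nabla u)\cdot\nabla w$ term in \eqref{target} and leaves no residual, so that the obstacle problem's Euler--Lagrange structure really does encode the descent condition for the full functional $I$. The delicate point is bookkeeping the sign and the role of the constraint $U\le0$ rather than any deep analytic difficulty, since feasibility of $u$ (which requires $u\le0$, part of $(u,v)\in\A$) guarantees $u$ is an admissible competitor in the obstacle problem and legitimizes the comparison $J(U)\le J(u)$.
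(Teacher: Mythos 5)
Your proof is correct and takes essentially the same approach as the paper: both exploit that $u$ itself is an admissible competitor in the obstacle problem (since $(u,v)\in\A$ forces $u\le0$) and both bound the directional derivative of $I$ in the direction $(U-u,0)$ by a non-positive multiple of $\int_\Omega|\nabla u-\nabla U|^2\,d\bx$. The only difference is cosmetic: the paper writes the first-order one-sided condition for the perturbation $U\mapsto U+\epsilon(u-U)$, obtaining the constant $1$, while you compare the values $J(U)\le J(u)$ directly, obtaining the constant $\tfrac12$; either constant gives strict descent unless $U=u$.
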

\begin{proof}
To begin with, it is true that the variational problem for $U$ admits a unique solution. This is a classical result on variational inequalities and obstacle problems (\cite{kinder}).

The proof of the statement amounts to writing the one-sided condition for a variation of the specific form
$$
U\mapsto U+\epsilon(u-U).
$$
Indeed, if $U$ is the minimizer of the problem in the statement (whose existence is a direct consequence of very classical results \cite{kinder}), replacing $U$ by $U+\epsilon(u-U)$ in the functional in the statement, differentiating with respect to $\epsilon$ and setting $\epsilon=0$, we find that
$$
\int_\Omega[-(\nabla u-\nabla U)\cdot(\nabla u-\nabla U)+(u-\overline u)(u-U)-
\lambda\nabla w\cdot(\nabla u-\nabla U)]\,d\bx\ge0.
$$
The derivative of $I$ at $(u, v)$ in the direction $(U-u, \bcero)$ computed earlier becomes
$$
\int_\Omega[(u-\overline u)(U-u)-\lambda(\nabla U-\nabla u)\cdot\nabla w)]\,d\bx\le-\int_\Omega|\nabla u-\nabla U|^2\,d\bx\le0.
$$
\end{proof}
\subsection{The numerical implementation}
Though the numerical implementation can make use of the previous lemma for a typical descent algorithm, we would like to explore the possibility of implementing a direct mechanism taking care of pointwise constraints through appropriate exponential barriers as in \cite{pedregal}. 

The procedure is in need of several auxiliary variables $a(\bx)$, $b^-(\bx)$, $b^+(\bx)$ that, somehow, play the role of multipliers, and goes as follows:
\begin{enumerate}
\item Initialization. Take $a_0(\bx)$, $b^-_0(\bx)$, $b^+_0(\bx)$ arbitrary but strictly positive all over the domain $\Omega$ (they, in particular, can be taken to be positive constants). 
\item Main iterative step until convergence. Suppose we have $a_{j-1}(\bx)$, $b^-_{j-1}(\bx)$, $b^+_{j-1}(\bx)$, then:
\begin{enumerate}
\item Approximate the unique solution of the unconstrained problem that consists of minimizing the funcional 
\begin{gather}
\int_\Omega\left(\frac12|u(\bx)-\overline u(\bx)|^2+\frac{\lambda}2|\nabla w(\bx)|^2+\exp(a_{j-1}(\bx)u(\bx))\right.\nonumber\\
+\left.\exp(b^-_{j-1}(\bx)(v_-(\bx)-v(\bx)))+\exp(b^+_{j-1}(\bx)(v(\bx)-v_+(\bx)))\right)\,d\bx\nonumber
\end{gather}
freely in pairs $(u, v)\in H^1_0(\Omega)\times L^2(\Omega)$
subject to
$$
-\dv[\nabla u(\bx)+\nabla w(\bx)]=v(\bx)\hbox{ in }\Omega,\quad
w=0\hbox{ on }\partial\Omega.
$$
This is a problem of the complementary kind indicated at the beginning of Section \ref{dos}, \eqref{statee}. The convexity conditions on the variables $(u, v)$ involved in the exponential terms ensure that there is a unique optimal pair $(u_j(\bx), v_j(\bx))$. 
\item Check if the three products occurring in the exponentials, namely
$$
a_{j-1}(\bx)u_j(\bx),\quad b^-_{j-1}(\bx)(v_-(\bx)-v_j(\bx)),\quad
b^+_{j-1}(\bx)(v_j(\bx)-v_+(\bx)),
$$
vanish, or are sufficiently small. If they are so, take the pair 
$$
(u_j(\bx), v_j(\bx))
$$ 
as a good approximation of the underlying optimal solution of the constrained problem. If at least one of the products is not, update coefficients according to the rule
\begin{gather}
a_j(\bx)=a_{j-1}(\bx)\exp(a_{j-1}(\bx)u_j(\bx)),\nonumber\\
b^-_j(\bx)=b^-_{j-1}(\bx)\exp(b^-_{j-1}(\bx)(v_-(\bx)-v_j(\bx))),\nonumber\\
b^+_j(\bx)=b^+_{j-1}(\bx)\exp(b^+_{j-1}(\bx)(v_j(\bx)-v_+(\bx))),\nonumber
\end{gather}
and proceed to the previous step. 
\end{enumerate}
\end{enumerate}
It is easy to understand, intuitively, the effect of the update rule for the triplet of coefficients. Beyond that understanding, we have the following convergence result.

\begin{theorem}\label{convergencia}
Let $\{(u_j, v_j)\}\in H^1_0(\Omega)\times L^2(\Omega)$ be the sequence of iterates produced by the previous scheme.  If there is a pair $(u, v)\in H^1_0(\Omega)\times L^2(\Omega)$ such that
$$
(u_j, v_j)\to(u, v)\hbox{ pointwise},
$$
and the three products
$$
a_{j-1}(\bx)u_j(\bx),\quad  b^-_{j-1}(\bx)(v_-(\bx)-v_j(\bx)),\quad 
b^+_{j-1}(\bx)(v_j(\bx)-v_+(\bx)),
$$
converge to zero for a.e. $\bx\in\Omega$, then $(u, v)$ is the solution of the underlying constrained problem \eqref{restest} (in particular it complies with the corresponding pointwise constraints).
\end{theorem}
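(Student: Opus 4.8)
The plan is to establish two things separately: that the limit pair $(u,v)$ lies in $\A$ (feasibility), and that it satisfies the variational inequality that characterizes the optimum of \eqref{restest}. Since that problem is a strictly convex, coercive minimization over the convex set $\A$, the variational inequality is both necessary and sufficient, so these two facts together give the conclusion. The exponential update rule is what drives feasibility, while the passage to the limit in the optimality relation, carried out along the same lines used for the unconstrained problem earlier, is where the analytic care is needed.

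For feasibility I would argue by contradiction, exploiting the monotone behaviour of each multiplier on the set where its constraint is violated in the limit. Suppose $u(\bx)>0$ on a set of positive measure. For such $\bx$, pointwise convergence gives $u_j(\bx)\ge u(\bx)/2>0$ for $j$ large; since the update $a_j=a_{j-1}\exp(a_{j-1}u_j)$ keeps $a_{j-1}(\bx)>0$, the factor $\exp(a_{j-1}u_j)>1$ forces $\{a_j(\bx)\}$ to be eventually increasing and hence bounded below by a strictly positive constant. Then $a_{j-1}(\bx)u_j(\bx)$ stays bounded away from zero, contradicting the hypothesis $a_{j-1}u_j\to0$. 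Thus $u\le0$ a.e.; the identical argument applied to $b^-_{j-1}(v_--v_j)$ and $b^+_{j-1}(v_j-v_+)$ yields $v_-\le v\le v_+$ a.e., so $(u,v)\in\A$.

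For optimality I would first write the Euler--Lagrange equation of the barrier subproblem solved at step $j$. Differentiating its functional and testing with $w_j$ exactly as in the unconstrained computation, and observing that the three exponential coefficients produced are precisely the updated multipliers $a_j$, $b^+_j$, $b^-_j$, the minimizer satisfies, for every $(U,V)\in H^1_0(\Omega)\times L^2(\Omega)$,
\[
\int_\Omega\big[(u_j-\overline u)U+\lambda(Vw_j-\nabla U\cdot\nabla w_j)+a_jU+(b^+_j-b^-_j)V\big]\,d\bx=0.
\]
Choosing the admissible direction $(U-u_j,V-v_j)$ with $(U,V)\in\A$ and rearranging gives
\[
\int_\Omega\big[(u_j-\overline u)(U-u_j)+\lambda((V-v_j)w_j-(\nabla U-\nabla u_j)\cdot\nabla w_j)\big]\,d\bx=-\int_\Omega\big[a_j(U-u_j)+(b^+_j-b^-_j)(V-v_j)\big]\,d\bx.
\]
I would then split the right-hand side into the sign-definite part $a_j(-U)+b^+_j(v_+-V)+b^-_j(V-v_-)$, which is pointwise nonnegative because $(U,V)\in\A$ and the multipliers are positive, plus the residual part $a_ju_j+b^+_j(v_j-v_+)+b^-_j(v_--v_j)$, each term of which equals a hypothesis-vanishing product times a factor $\exp(\cdot)\to1$ and hence tends to $0$ a.e. This makes the right-hand side bounded below by the integral of a quantity that converges to $0$, so the $\liminf$ of the right-hand side is $\ge0$.

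It then remains to pass to the limit on the left. Extracting uniform $H^1_0\times L^2\times H^1_0$ bounds on $(u_j,v_j,w_j)$ from the energy estimate for the (now linear) state law, Rellich compactness gives $u_j\to u$ and $w_j\to w$ strongly in $L^2$, together with the weak convergences in $H^1_0$ and $L^2$, the weak limits agreeing with the pointwise ones. The zeroth-order and mixed terms then converge, while the only genuinely quadratic contribution, isolated through $\int\nabla u_j\cdot\nabla w_j=\int v_jw_j-\|\nabla w_j\|_{L^2(\Omega)}^2$ by testing the state equation with $w_j$, is weakly lower semicontinuous in exactly the direction that bounds the $\limsup$ of the left-hand side above by the target variational-inequality expression. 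Combining this with the $\liminf\ge0$ of the right-hand side yields the inequality for every $(U,V)\in\A$, which is optimality. The main obstacle I anticipate is precisely this limiting step for the multiplier terms: the residual products vanish only pointwise, so deducing that their integrals vanish, and that the positive multiplier integrals do not blow up, needs a uniform-integrability or domination property beyond what the stated hypotheses literally provide; the companion difficulty is extracting the a priori $H^1_0$ bound on the iterates, which pointwise convergence alone does not give and which I would derive from the minimality of each subproblem or build into slightly strengthened hypotheses.
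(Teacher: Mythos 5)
Your feasibility argument is exactly the paper's: the same contradiction between $u(\bx)>0$, the pointwise convergence $u_j(\bx)\to u(\bx)$, the hypothesis $a_{j-1}(\bx)u_j(\bx)\to0$, and the fact that the update rule then forces $\{a_j(\bx)\}$ to be increasing and bounded away from zero. The optimality half, however, takes a genuinely different route. The paper never writes the Euler--Lagrange system of the barrier subproblems; it argues by direct comparison of functional values: if a feasible $(\overline u,\overline v)$ had $I[\overline u,\overline v]<I[u,v]$, then, since the exponential terms of any feasible pair are pointwise $\le 1$ while Fatou applied to the iterates (whose exponential arguments tend to $0$ a.e.) together with weak lower semicontinuity of $I$ gives $\liminf_k I_{(a_k,b^-_k,b^+_k)}[u_k,v_k]\ge I[u,v]+3|\Omega|$, one would obtain $I_{(a_j,b^-_j,b^+_j)}[\overline u,\overline v]<I_{(a_j,b^-_j,b^+_j)}[u_j,v_j]$ for large $j$, contradicting the minimality of the iterate. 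Your route instead passes to the limit in first-order conditions to reach the variational inequality for \eqref{restest}; your observation that the derivative of the barrier at the minimizer is precisely the updated multiplier $a_j$, $b^\pm_j$ is correct and is what makes this plan coherent. Two of the difficulties you flag are in fact not obstacles: the residual multiplier terms each have the form $s_je^{s_j}$ with $s_j$ the vanishing product, and since $se^s\ge -e^{-1}$ for all $s$, Fatou with the integrable minorant $-3e^{-1}$ yields $\liminf$ of the right-hand side $\ge0$ with no uniform integrability needed (this is essentially the paper's own Fatou step); and you do not need strong $L^2$ convergence of $u_j$ or $w_j$, because after substituting $\int_\Omega\nabla u_j\cdot\nabla w_j\,d\bx=\int_\Omega v_jw_j\,d\bx-\int_\Omega|\nabla w_j|^2\,d\bx$ the bilinear terms $\pm\lambda\int_\Omega v_jw_j\,d\bx$ cancel, and the surviving quadratic terms $-\int_\Omega|u_j|^2\,d\bx$ and $-\lambda\int_\Omega|\nabla w_j|^2\,d\bx$ carry the sign for which weak lower semicontinuity bounds the $\limsup$ in the desired direction.

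The one genuine gap --- which you honestly flag, but which your route needs more urgently than the paper's --- is the a priori bound on the iterates. Minimality against a fixed feasible pair bounds $\|u_j\|_{L^2(\Omega)}$ and $\|\nabla w_j\|_{L^2(\Omega)}$, but the cost in \eqref{restest} carries no $\frac\mu2\|v\|^2$ term, and the coefficients $b^\pm_j$ shrink wherever the control constraints are strictly satisfied, so there is no uniform-in-$j$ coercivity in $v$; without $\|v_j\|_{L^2(\Omega)}\le C$ you can neither bound $\|u_j\|_{H^1_0(\Omega)}$ through the state equation nor pass to the limit in its weak form to identify $w$ as the defect of the limit pair $(u,v)$, and both are indispensable for your variational inequality. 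In fairness, the paper glosses over the same point (it asserts that a weakly convergent subsequence exists), but its comparison argument consumes only weak convergence and weak lower semicontinuity, whereas a complete write-up of your argument must either prove this $L^2$ bound on $\{v_j\}$ or add it as a hypothesis.
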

\begin{proof}
For given, non-negative, measurable $a(\bx)$, $b^-(\bx)$, $b^+(\bx)$, the associated, unconstrained optimal control problem over the space $H^1_0(\Omega)\times L^2(\Omega)$ with functional
\begin{gather}
I_{(a, b^-, b^+)}[u, v]=\int_\Omega\left(\frac12|u(\bx)-\overline u(\bx)|^2+\frac{\lambda}2|\nabla w(\bx)|^2+\exp(a(\bx)u(\bx))\right.\nonumber\\
+\left.\exp(b^-(\bx)(v_-(\bx)-v(\bx)))+\exp(b^+(\bx)(v(\bx)-v_+(\bx)))\right)\,d\bx\nonumber
\end{gather}
under the state equation
$$
-\dv[\nabla u(\bx)+\nabla w(\bx)]=v(\bx)\hbox{ in }\Omega,\quad
w=0\hbox{ on }\partial\Omega.
$$
admits a unique optimal pair $(u, v)$ which, obviously, will depend on the triplet $(a, b^-, b^+)$. As pointed out above, this is a consequence of our results in Section \ref{dos}. Because for arbitrary triplets $(a, b^-, b^+)$, the exponential terms are non-negative, the family of functionals 
$$
\{I_{(a, b^-, b^+)}: (a, b^-, b_+)\ge0\}
$$
are uniformly bounded from below by the one without those exponential terms which is the initial functional $I$. Note that the state law is independent of the triplet. Recall that the problem whose optimal solution we would like to approximate, namely
\begin{equation}
\hbox{Minimize in }(u, v)\in H^1_0(\Omega)\times L^2(\Omega):\quad \int_\Omega\left(\frac12|u(\bx)-\overline u(\bx)|^2+\frac{\lambda}2|\nabla w(\bx)|^2\right)\,d\bx
\end{equation}
subject to
$$
-\dv[\nabla u(\bx)+\nabla w(\bx)]=v(\bx)\hbox{ in }\Omega,\quad
w=0\hbox{ on }\partial\Omega,
$$
is coercive, and admits a unique optimal solution according to our discussion in Section \ref{dos}. As a consequence, the sequence of optimal pairs $(u_j, v_j)$ corresponding to a given sequence of functionals $I_{(a_j, b^-_j, b^+_j)}$ will admit a weakly converging subsequence to some $(u, v)$ in $H^1_0(\Omega)\times L^2(\Omega)$. On the other hand, note how for feasible triplets $(a, b^-, b^+)$ and feasible pairs $(u, v)$, the arguments in the exponential are non-positive, and so those exponential belong to any $L^p$ space (provided that $\Omega$ is bounded). 

In particular, if $(u_j, v_j)$ is the full sequence of iterates generated successively through our proposed algorithm, with $\{(a_j, b^-_j, b^+_j)\}$ the corresponding sequence of iterates for which the hypotheses of the statement hold, the limit pair $(u, v)\in H^1_0(\Omega)\times L^2(\Omega)$. 
\begin{enumerate}
\item For a.e. $\bx\in\Omega$, we claim that
$$
u(\bx)\le0,\quad v_-(\bx)\le v(\bx)\le v_+(\bx).
$$
Let $\bx$ be a certain point, where, in addition to the the pointwise convergences in the statement of the theorem, we have, say, $u(\bx)>0$ (the other two possibilities are argued exactly in the same way). The three conditions
$$
u(\bx)>0, \quad u_j(\bx)\to u(\bx),\quad a_{j-1}(\bx)u_j(\bx)\to0,
$$
together with the update rule
$$
a_j(\bx)=a_{j-1}(\bx)\exp(a_{j-1}(\bx)u_j(\bx))
$$
are clearly inconsistent (at least for large $j$). Recall that $a_{j-1}(\bx)$ is always strictly positive (though it may be very small). Note how this update rule implies that the sequence $\{a_j(\bx)\}$ is an increasing sequence of positive numbers because the arguments in the exponentials are strictly positive. The same is true for the pointwise constraints for $v$. 
\item The limit pair $(u, v)$ is the unique optimal solution of our initial constrained optimal control problem. Suppose not, and let $(\overline u, \overline v)$ be a feasible pair for our constrained problem such that 
\begin{equation}\label{desigu}
I[\overline u, \overline v]< I[u, v]
\end{equation}
where $I[u, v]$ is the cost functional for our optimal control problem. Recall that $I_{(a, b^-, b^+)}$ is just $I$ plus the three exponential terms corresponding to the triplet $(a, b^-, b^+)$. Let $j$ be large. If we further put
$$
I^{exp}_{(a, b^-, b^+)}[u, v]=\int_\Omega[(\exp(au)+\exp(b^-(v_--v))+
\exp(b^+(v-v^+))]\,d\bx,
$$
then
$$
I_{(a_j, b^-_j, b^+_j)}[\overline u, \overline v]=I[\overline u, \overline v]+
I^{exp}(a_j, b^-_j, b^+_j)[\overline u, \overline v].
$$
Through this identity we realize that (bear in mind \eqref{desigu})
\begin{equation}\label{desigudos}
I_{(a_j, b^-_j, b^+_j)}[\overline u, \overline v]<I[u, v]+3\le \liminf_{k\to\infty} I_{(a_k, b^-_k, b^+_k)}[u_k, v_k],
\end{equation}
because, on the one hand,
$$
\exp(a_j\overline u)\le 1=\lim_{k\to\infty}\exp(a_ku_k),
$$
and similarly for the inequalities involving $b^\pm_j$ and $\overline v$, and, on the other, we have the weak lower semicontinuity
$$
I(u, v)\le\liminf_{k\to\infty} I(u_k, v_k).
$$
Recall that $(u_k, v_k)\to(u, v)$ weakly in $H^1_0(\Omega)\times L^2(\Omega)$. But the resulting inequality in \eqref{desigudos} yields that there is a sufficiently large $j$ for which
$$
 I_{(a_j, b^-_j, b^+_j)}[\overline u, \overline v]<I_{(a_j, b^-_j, b^+_j)}[u_j, v_j],
 $$
 but this is not possible because, by definition, the pair $(u_j, v_j)$ is the unique optimal solution of the optimal control problem over $H^1_0(\Omega)\times L^2(\Omega)$ with cost functional $I_{(a_j, b^-_j, b^+_j)}$ and the same linear underlying state law. This contradiction shows that indeed $(u, v)$ is our optimal pair. 
\end{enumerate}
\end{proof}

\begin{figure}[b]
\includegraphics[scale=0.29]{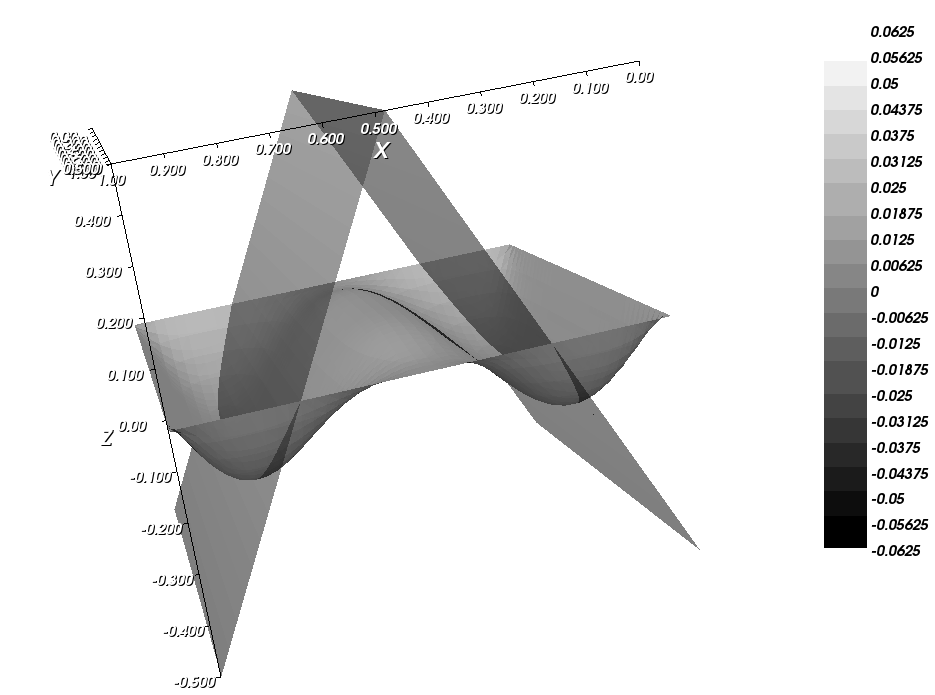}
\caption{A comparison between the target and the optimal profile $u$ for the state constrained example.}
\label{final}       
\end{figure}

\begin{figure}[b]
\includegraphics[scale=0.29]{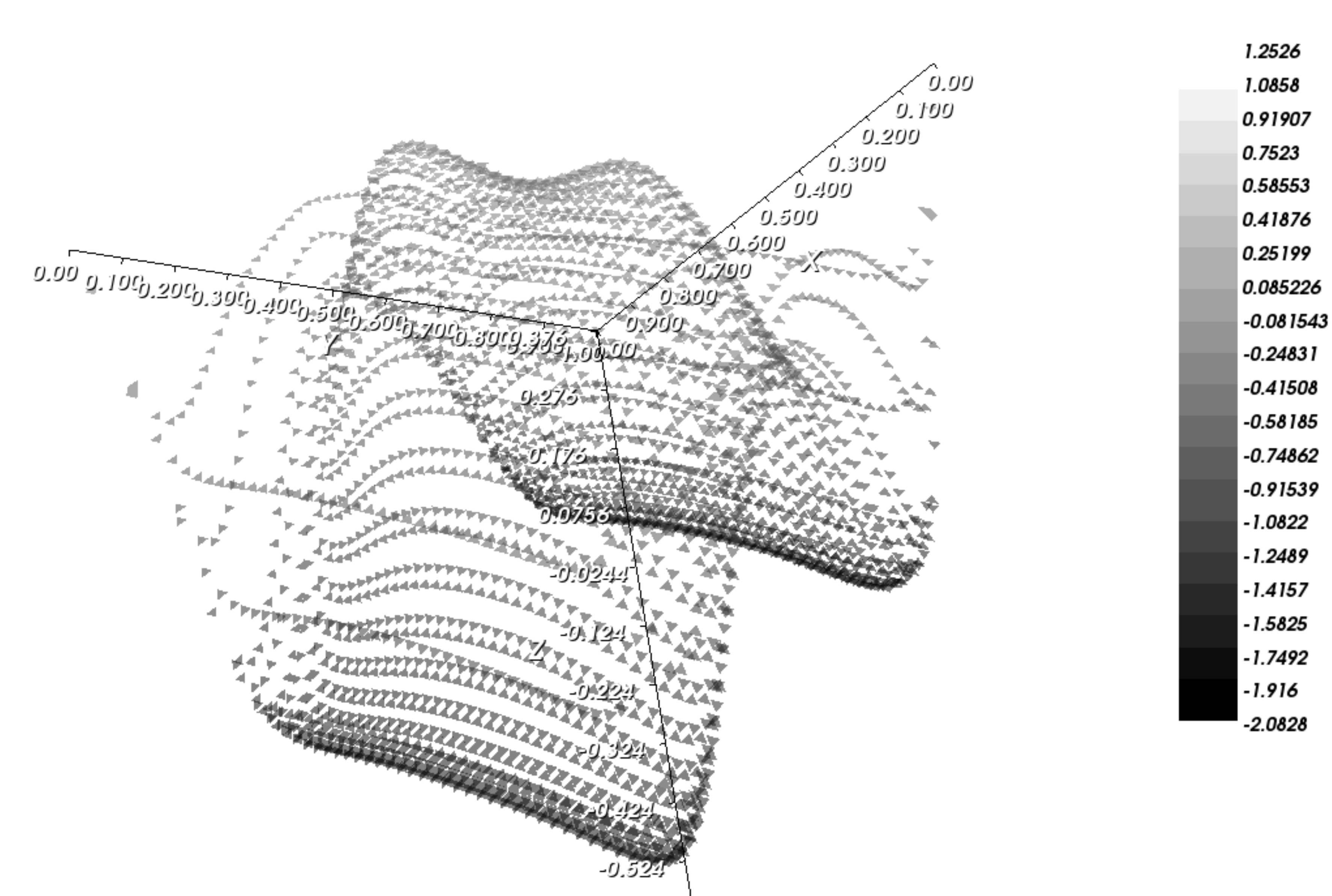}
\caption{Control function $v$ for the state constrained case.}
\label{finall}       
\end{figure}

\begin{figure}[b]
\includegraphics[scale=0.29]{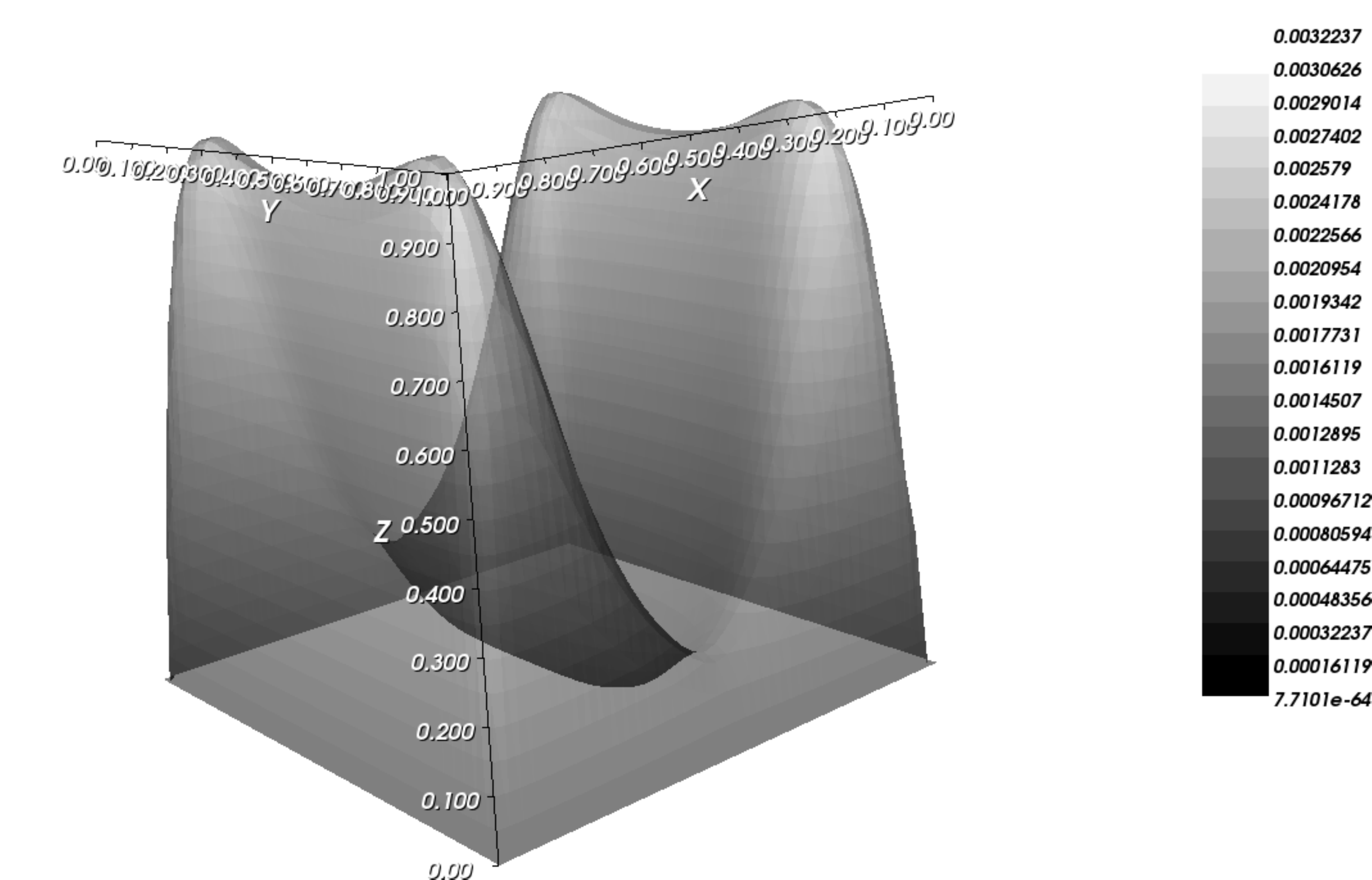}
\caption{Residual function $w$ for the constrained example.}
\label{finalll}       
\end{figure}

Just for the sake of illustration, we present one example of the situation just treated. As usual $\Omega$ is the unit square, and we take
$$
\overline u(x, y)=\frac14(\min(x, 1-x)-\frac14),\quad 
v_+\equiv 5., v_-\equiv-3.,\quad \lambda=.1.
$$
As starting values for the auxiliary functions $a$, $b^-$, $b^+$ we set
$$
a_0\equiv b^-_0\equiv b^+_0\equiv.1.
$$
The computed state $u$ is shown in Figure \ref{final}, while the approximated control $v$ can be checked on Figure \ref{finall}. As stated in Theorem \ref{convergencia}, the vanishing pointwise limit of  the three products 
$$
a_{j-1}(\bx)u_j(\bx),\quad  b^-_{j-1}(\bx)(v_-(\bx)-v_j(\bx)),\quad 
b^+_{j-1}(\bx)(v_j(\bx)-v_+(\bx)),
$$
to zero is a certificate of convergence. See these three products for this example in Figures \ref{certa}, \ref{certb-}, \ref{certb+}. 

\begin{figure}[b]
\includegraphics[scale=0.29]{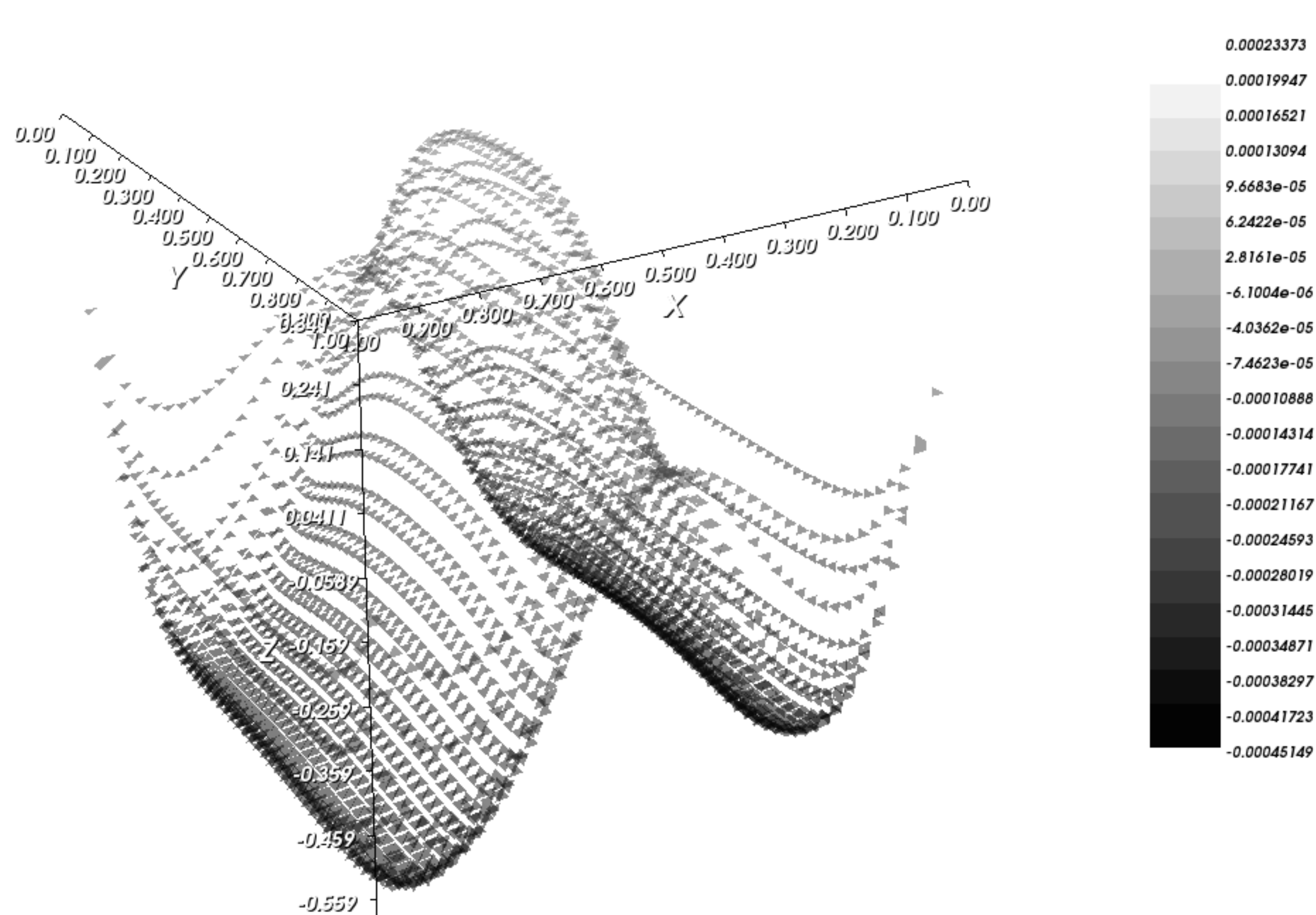}
\caption{Graph of the first product $au$.}
\label{certa}       
\end{figure}

\begin{figure}[b]
\includegraphics[scale=0.29]{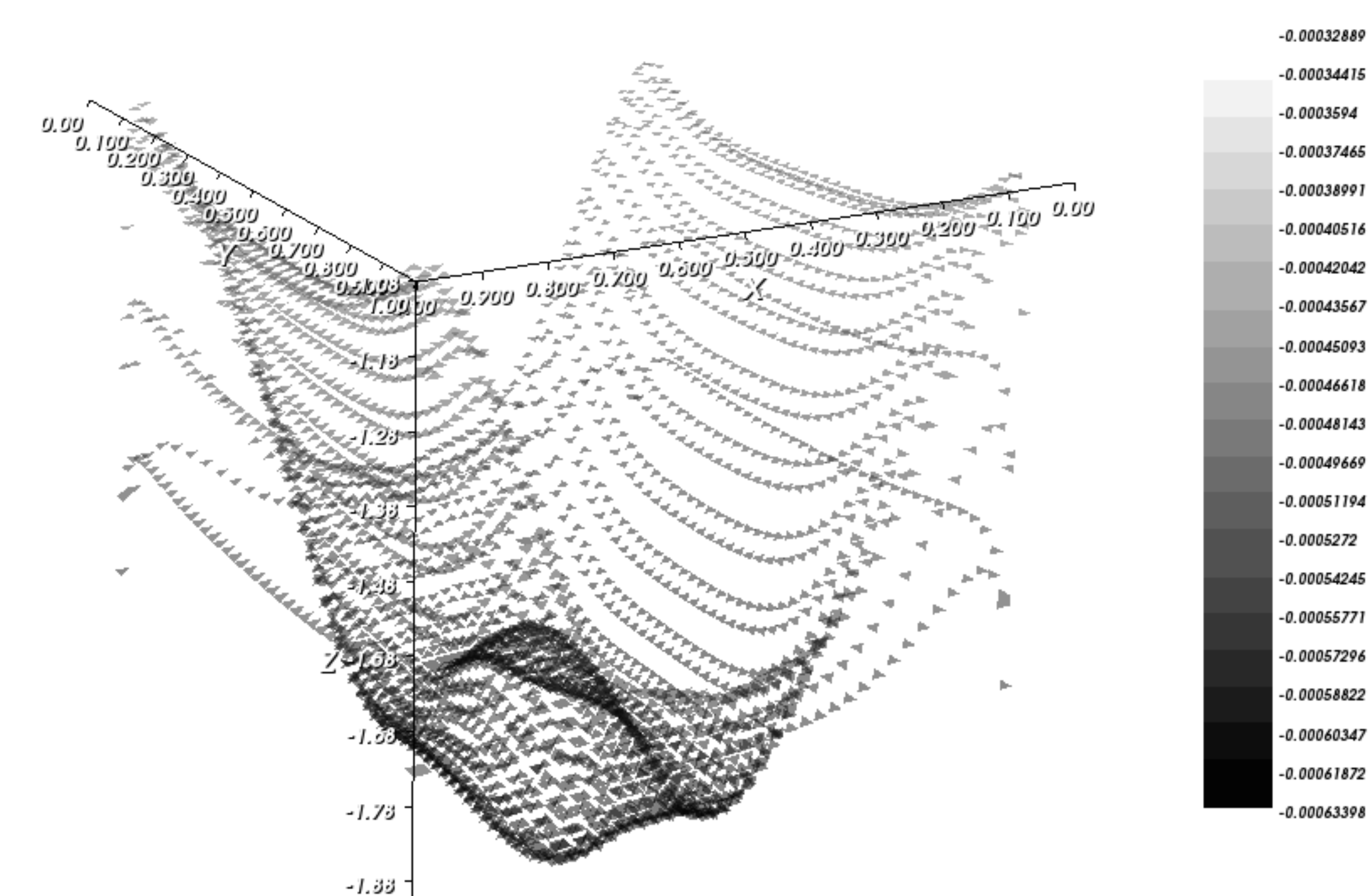}
\caption{The second product $b^-(v_--v)$.}
\label{certb-}       
\end{figure}

\begin{figure}[b]
\includegraphics[scale=0.29]{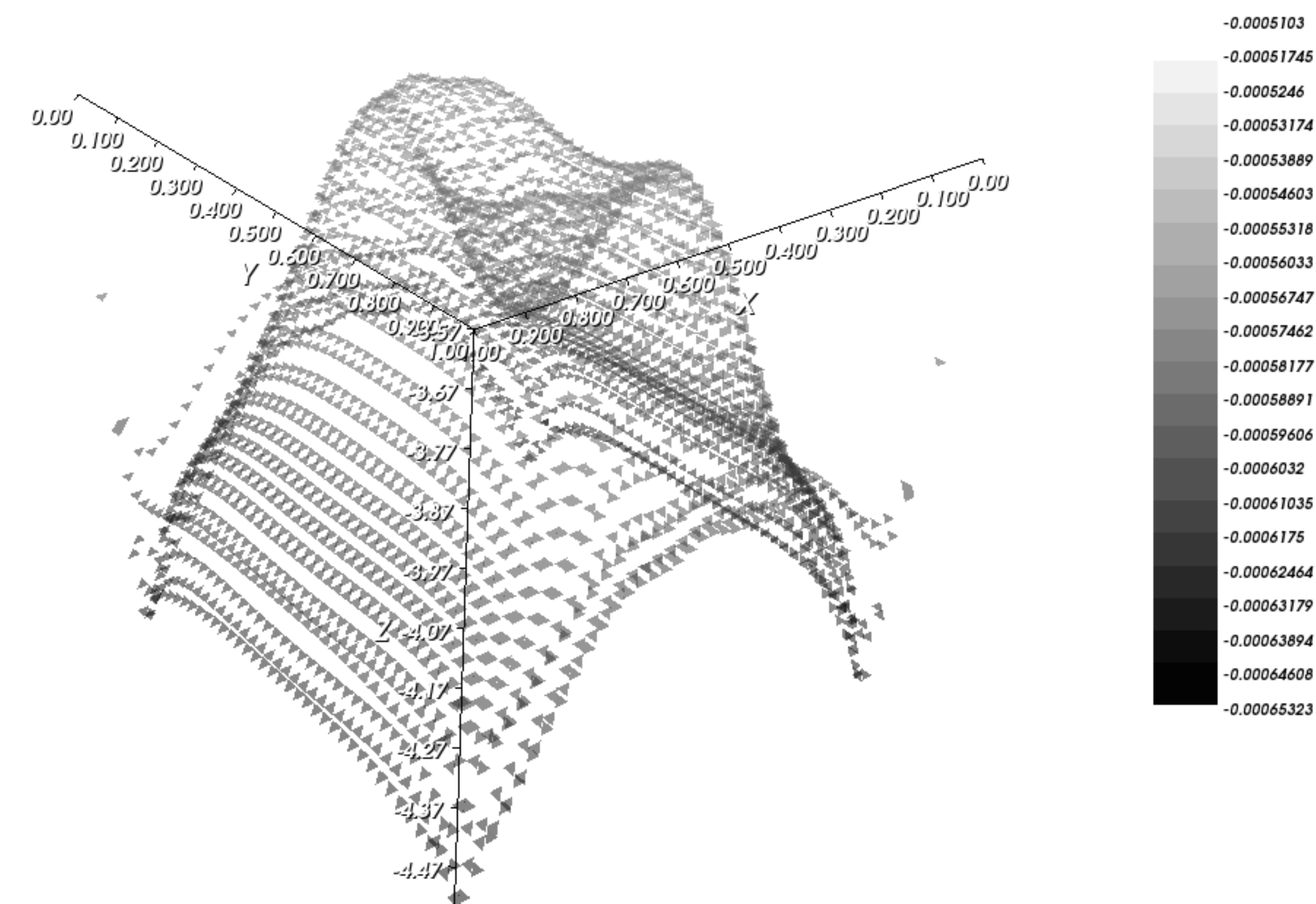}
\caption{The third product $b^+(v-v_+)$.}
\label{certb+}       
\end{figure}

\section{Asymptotic behavior as $\lambda\to\infty$}
We would like to understand the limit behavior of the optimal solutions $u_\lambda$ of our two model problems in Sections \ref{dos} and \ref{cuatro} as $\lambda\to\infty$, and in particular check that they converge to the optimal solution of the underlying optimal control problem without involving the auxiliary residual function $w$. 

For the first situation, we go back to 
$$
\hbox{Minimize in }(u, v):\quad \int_\Omega\left(\frac12|u(\bx)-\overline u(\bx)|^2+\frac\mu2|v(\bx)|^2+\frac{\lambda}2|\nabla w(\bx)|^2\right)\,d\bx
$$
subject to $(u, v)\in H^1_0(\Omega)\times L^2(\Omega)$, and 
$$
-\dv[\nabla u(\bx)+\nabla w(\bx)]+\phi(u(\bx))=v(\bx)\hbox{ in }\Omega,\quad
w=0\hbox{ on }\partial\Omega.
$$
According to our Theorem \ref{existencia} there is always an optimal pair 
$$
(u_\lambda, v_\lambda)\in H^1_0(\Omega)\times L^2(\Omega). 
$$
On the other hand, it is also well-known that the associated problem 
$$
\hbox{Minimize in }v\in L^2(\Omega):\quad 
\int_\Omega\left(\frac12|u(\bx)-\overline u(\bx)|^2+\frac\mu2|v(\bx)|^2\right)\,d\bx
$$
where
\begin{equation}\label{limite}
-\dv(\nabla u)+\phi(u)=v\hbox{ in }\Omega,\quad u=0\hbox{ in }\partial\Omega.
\end{equation}
also admits an optimal pair $(\tilde u, \tilde v)\in H^1_0(\Omega)\times L^2(\Omega)$. We would like to check how the convergence $(u_\lambda, v_\lambda)\to(\tilde u, \tilde v)$ takes place. To this end, we need to strengthen the conditions  on the non-linear term $\phi(u)$: in addition to have growth of order $N/N-2$ at infinity, as before, we demand that $\phi$ is non-decreasing so that
\begin{equation}\label{monotone}
(\phi(u_1)-\phi(u_2))(u_1-u_2)\ge0
 \end{equation}
 for every pair of numbers $u_1$, $u_2$. 

\begin{theorem}
Under the assumptions just explained, let $(u_\lambda, v_\lambda)$ be an optimal pair for its corresponding optimal control problem. There is a subsequence, as $\lambda\to\infty$, and an optimal pair $(\tilde u, \tilde v)$ such that 
$$
u_\lambda\to\tilde u\hbox{ in }H^1_0(\Omega),\quad
v_\lambda\to\tilde v\hbox{ in }L^2(\Omega).
$$
\end{theorem}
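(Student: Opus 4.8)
The plan is to use an optimal pair of the limiting problem as a competitor in the $\lambda$-problem. Write $I_\lambda$ for the cost functional of the $\lambda$-problem and $J[u,v]=\int_\Omega(\frac12|u-\overline u|^2+\frac\mu2|v|^2)\,d\bx$ for that of the limiting problem \eqref{limite}, and let $m$ be the optimal value of the latter. Since an optimal pair of the limiting problem solves the exact state equation, its associated defect vanishes, so its $\lambda$-cost equals its limiting cost $m$, independently of $\lambda$. By optimality of $(u_\lambda,v_\lambda)$,
$$
I_\lambda[u_\lambda,v_\lambda]\le m.
$$
Reading off the three non-negative terms of $I_\lambda$, this yields uniform-in-$\lambda$ bounds for $\|u_\lambda-\overline u\|_{L^2(\Omega)}$ and, since $\mu>0$, for $\|v_\lambda\|_{L^2(\Omega)}$, and, crucially, $\frac\lambda2\|\nabla w_\lambda\|_{L^2(\Omega)}^2\le m$, so that $\|\nabla w_\lambda\|_{L^2(\Omega)}\to0$: the defect $w_\lambda\to0$ strongly in $H^1_0(\Omega)$. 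Testing the state equation with $u_\lambda$ and noting that the monotonicity of $\phi$ makes the affine bound of Theorem \ref{existencia} hold with $l\equiv\phi(0)$, so that $\int_\Omega\phi(u_\lambda)u_\lambda$ is bounded below, the quadratic-inequality argument of Theorem \ref{existencia} furnishes a uniform bound for $\|\nabla u_\lambda\|_{L^2(\Omega)}$.

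I would then pass to a subsequence with $u_\lambda\rightharpoonup\hat u$ in $H^1_0(\Omega)$ (hence $u_\lambda\to\hat u$ in $L^2(\Omega)$ by Rellich, and a.e. along a further subsequence) and $v_\lambda\rightharpoonup\hat v$ in $L^2(\Omega)$. Passing to the limit in the weak form of the state equation is routine: the term $\int_\Omega\nabla w_\lambda\cdot\nabla\theta$ drops out since $w_\lambda\to0$ in $H^1_0(\Omega)$, while $\int_\Omega\phi(u_\lambda)\theta\to\int_\Omega\phi(\hat u)\theta$ because the growth hypothesis keeps $\{\phi(u_\lambda)\}$ bounded in $L^2(\Omega)$ and $\phi(u_\lambda)\to\phi(\hat u)$ a.e., giving weak $L^2$ convergence. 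Thus $-\dv(\nabla\hat u)+\phi(\hat u)=\hat v$, so $(\hat u,\hat v)$ is feasible for the limiting problem. Weak lower semicontinuity of $J$ together with $J[u_\lambda,v_\lambda]\le I_\lambda[u_\lambda,v_\lambda]\le m$ then yields
$$
m\le J[\hat u,\hat v]\le\liminf_{\lambda\to\infty}J[u_\lambda,v_\lambda]\le\limsup_{\lambda\to\infty}J[u_\lambda,v_\lambda]\le m,
$$
where the first inequality is feasibility. Hence $J[\hat u,\hat v]=m$, so $(\hat u,\hat v)$ is optimal for the limiting problem and the whole chain collapses, giving in particular $J[u_\lambda,v_\lambda]\to J[\hat u,\hat v]$.

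The delicate point is upgrading to the strong convergence claimed. For $v_\lambda$: since $u_\lambda\to\hat u$ in $L^2(\Omega)$ gives $\frac12\|u_\lambda-\overline u\|_{L^2}^2\to\frac12\|\hat u-\overline u\|_{L^2}^2$, the energy convergence forces $\|v_\lambda\|_{L^2}\to\|\hat v\|_{L^2}$; weak convergence plus norm convergence in the Hilbert space $L^2(\Omega)$ then gives $v_\lambda\to\hat v$ strongly. For $u_\lambda$: I would test the equation for $(u_\lambda,v_\lambda,w_\lambda)$ and that for $(\hat u,\hat v)$ with the common test function $\theta=u_\lambda-\hat u$ and subtract, obtaining
$$
\int_\Omega|\nabla(u_\lambda-\hat u)|^2\,d\bx+\int_\Omega(\phi(u_\lambda)-\phi(\hat u))(u_\lambda-\hat u)\,d\bx=\int_\Omega(v_\lambda-\hat v)(u_\lambda-\hat u)\,d\bx-\int_\Omega\nabla w_\lambda\cdot\nabla(u_\lambda-\hat u)\,d\bx.
$$
By the monotonicity hypothesis \eqref{monotone} the second term on the left is non-negative and may be discarded; on the right, the first term tends to $0$ because $u_\lambda-\hat u\to0$ in $L^2(\Omega)$ while $\{v_\lambda-\hat v\}$ is bounded in $L^2(\Omega)$, and the second tends to $0$ because $\|\nabla w_\lambda\|_{L^2}\to0$ while $\|\nabla(u_\lambda-\hat u)\|_{L^2}$ is bounded. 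Therefore $\|\nabla(u_\lambda-\hat u)\|_{L^2}\to0$, i.e. $u_\lambda\to\hat u$ in $H^1_0(\Omega)$; relabelling $(\hat u,\hat v)$ as the optimal pair $(\tilde u,\tilde v)$ of the statement completes the argument.

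I expect this final estimate to be the crux. Without the extra monotonicity \eqref{monotone} the sign of $\int_\Omega(\phi(u_\lambda)-\phi(\hat u))(u_\lambda-\hat u)$ is uncontrolled, and one could only assert weak, not strong, $H^1$ convergence of the states; monotonicity is exactly the ingredient that closes the energy estimate for $u_\lambda-\hat u$.
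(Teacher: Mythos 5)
Your proposal is correct and follows essentially the same route as the paper: comparison with an exact-state competitor to get the uniform bounds and $w_\lambda\to0$, the quadratic-inequality argument of Theorem \ref{existencia} for the $H^1_0$ bound, subtraction of the two state equations tested with $u_\lambda-\hat u$ with monotonicity \eqref{monotone} discarding the $\phi$ term to get strong $H^1_0$ convergence, and energy-plus-weak convergence to upgrade $v_\lambda$ to strong $L^2$ convergence. Your write-up is in fact slightly cleaner than the paper's in two respects: you establish optimality of the weak limit directly by the sandwich inequality before any strong convergence (the paper conflates the fixed optimal pair $(\tilde u,\tilde v)$ with the weak limit in its notation), and you note explicitly that monotonicity yields the hypothesis of Theorem \ref{existencia} with $l\equiv\phi(0)$.
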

\begin{proof}
If we let
$$
I_\lambda[u, v]=\int_\Omega\left(\frac12|u(\bx)-\overline u(\bx)|^2+\frac\mu2|v(\bx)|^2+\frac\lambda2|\nabla w(\bx)|^2\right)\,d\bx,
$$
because the optimal pair $(\tilde u, \tilde v)$ is feasible for the problem depending on $\lambda$, we conclude that
\begin{equation}\label{minimo}
I_\lambda[u_\lambda, v_\lambda]\le I_\lambda[\tilde u, \tilde v]=
\int_\Omega\left(\frac12|\tilde u(\bx)-\overline u(\bx)|^2+\frac\mu2|\tilde v(\bx)|^2\right)\,d\bx,
\end{equation}
and this last integral is a constant $C$ independent of $\lambda$. In particular, $\{(u_\lambda, v_\lambda)\}$ is a uniformly bounded set of $L^2(\Omega)^2$. 
If we put $w_\lambda$ for the solution of the problem
\begin{equation}\label{landa}
-\dv[\nabla u_\lambda+\nabla w_\lambda]+\phi(u_\lambda)=v_\lambda\hbox{ in }\Omega,\quad
w_\lambda=0\hbox{ on }\partial\Omega,
\end{equation}
then it is clear that
$$
\frac\lambda2\int_\Omega|\nabla w_\lambda|^2\,d\bx\le I_\lambda[u_\lambda, v_\lambda]\le C.
$$
As $\lambda$ grows indefinitely, this inequality implies that the full sequence $w_\lambda\to0$ (strong) in $H^1_0(\Omega)$. Arguing as in the proof of Theorem \ref{existencia}, we also conclude that $\{u_\lambda\}$ is in fact uniformly bounded in $H^1_0(\Omega)$. 

Suppose that for a certain subsequence (which we do not care to relabel), we have $u_\lambda\rightharpoonup u$ in $H^1_0(\Omega)$, and $v_\lambda\rightharpoonup v$ in $L^2(\Omega)$ for certain functions $u$, $v$.  By using the weak formulation of the equations for $u_\lambda$, $v_\lambda$, and the convergence $w_\lambda\to0$, it is elementary to check that $(\tilde u, \tilde v)$ is feasible (they comply with \eqref{limite}) for the limit problem. We would like to prove that in fact $u_\lambda\to\tilde u$ (strong) in $H^1_0(\Omega)$, $v_\lambda\to\tilde v$ (strong) in $L^2(\Omega)$, and so $(\tilde u, \tilde v)$ is optimal for the limit problem. 

By subtracting the two equations \eqref{landa}, and \eqref{limite} for $(\tilde u, \tilde v)$, it is immediate to have
$$
\int_\Omega\left[(\nabla u_\lambda-\nabla\tilde u)\cdot\nabla z+\nabla w_\lambda\cdot\nabla z+(\phi(u_\lambda)-\phi(\tilde u)) z+(\tilde v-v_\lambda)z\right]\,d\bx=0
$$
for every $z\in H^1_0(\Omega)$. For $z=u_\lambda-\tilde u$, we would have
\begin{align}
\int_\Omega&\left[(\nabla u_\lambda-\nabla\tilde u)\cdot(\nabla u_\lambda-\nabla\tilde u)+\nabla w_\lambda\cdot(\nabla u_\lambda-\nabla\tilde u)\right.\nonumber\\
&\left.+(\phi(u_\lambda)-\phi(\tilde u)) (u_\lambda-\tilde u)+(\tilde v-v_\lambda) (u_\lambda-\tilde u)\right]\,d\bx=0.\nonumber
\end{align}
As far as the non-linear term
$$
\int_\Omega(\phi(u_\lambda)-\phi(\tilde u)) (u_\lambda-\tilde u)\,d\bx
$$
is concerned, we can say that it is nonnegative due to \eqref{monotone}. Notice that it does not suffice to know that 
$u_\lambda\to\tilde u$ in $L^2(\Omega)$. 
Thus
$$
\int_\Omega\left[(\nabla u_\lambda-\nabla\tilde u)\cdot(\nabla u_\lambda-\nabla\tilde u)+\nabla w_\lambda\cdot(\nabla u_\lambda-\nabla\tilde u)+(\tilde v-v_\lambda) (u_\lambda-\tilde u)\right]\,d\bx\le0.
$$
The last two terms in this inequality converge to zero because $u_\lambda\to\tilde u$ in $L^2(\Omega)$, $u_\lambda\rightharpoonup\tilde u$ in $H^1_0(\Omega)$, $v_\lambda\rightharpoonup\tilde v$ in $L^2(\Omega)$, and $w_\lambda\to0$ in $H^1_0(\Omega)$. Hence, we ought to have $u_\lambda\to\tilde u$ in $H^1_0(\Omega)$ as well. Next, by weak lower semicontinuity
\begin{equation}\label{wls}
\int_\Omega|\tilde v|^2\,d\bx\le\liminf_{\lambda\to\infty}\int_\Omega|v_\lambda|^2\,d\bx.
\end{equation}
But then, taking \eqref{minimo} into account, 
\begin{align}
\liminf_{\lambda\to\infty}I_\lambda[u_\lambda, v_\lambda]\le &
\int_\Omega\left(\frac12|\tilde u-\overline u|^2+\frac\mu2|\tilde v|^2\right)\,d\bx\nonumber\\
&\le\liminf_{\lambda\to\infty}\int_\Omega\left(\frac12|u_\lambda-\overline u|^2+\frac\mu2|v_\lambda|^2+\frac\lambda2|\nabla w_\lambda|^2\right)\,d\bx\nonumber\\
&=\liminf_{\lambda\to\infty}I_\lambda[u_\lambda, v_\lambda].\nonumber
\end{align}
We hence can conclude that  \eqref{wls} is an equality, and then $v_\lambda\to\tilde v$ in $L^2(\Omega)$. In addition
$$
\lambda\|w_\lambda\|^2_{H^1_0(\Omega)}\to0
$$
as $\lambda\to\infty$. All these strong convergences imply that the pair $(\tilde u, \tilde v)$ is optimal for the limit problem.
\end{proof}

The situation for the problem under explicit pointwise constraints for state is very much the same. Condition \eqref{monotone} can be weaken a bit in the spirit of Theorem \ref{existencia}.


\begin{thebibliography}{99}
\bibitem{casasmateos} Casas, E., Mateos, M., Optimal control of partial differential equations, Computational Mathematics, Numerical Analysis and Applications, Lectures Notes of the XVII J. L. Lions Spanish-French School, M. Mateos, P. Alonso, editors, SEMA-SIMAI Springer Series, 13, 3-60, 2017.

\bibitem{gugat}  Gugat, M. \textit{Optimal boundary control and boundary stabilization of hyperbolic systems}, SpringerBriefs in Electrical and Computer Engineering, SpringerBriefs in Control, Automation and Robotics. Springer, Cham, 2015.

\bibitem{freefem} Hecht, F., New development in freefem++, J. Numer. Math. 20 (2012), no. 3-4, 251-265. 

\bibitem{kinder} Kinderlehrer, D., Stampacchia, G., An introduction to variational inequalities and their applications. Reprint of the 1980 original. Classics in Applied Mathematics, 31. Society for Industrial and Applied Mathematics (SIAM), Philadelphia, PA, 2000. 

\bibitem{kogutleugering}  Kogut, P. I., Leugering, G. R., \textit{Optimal control problems for partial differential equations on reticulated domains}, Approximation and asymptotic analysis. Systems \& Control: Foundations \& Applications, Birkhäuser/Springer, New York, 2011.

\bibitem{lastriuno}  Lasiecka, I., Triggiani, R., \textit{Control theory for partial differential equations: continuous and approximation theories. I. Abstract parabolic systems}, Encyclopedia of Mathematics and its Applications, 74, Cambridge University Press, Cambridge, 2000. 

\bibitem{lastridos}  Lasiecka, I., Triggiani, R., \textit{Control theory for partial differential equations: continuous and approximation theories. II. Abstract hyperbolic-like systems over a finite time horizon}, Encyclopedia of Mathematics and its Applications, 75, Cambridge University Press, Cambridge, 2000.

\bibitem{lions1} Lions, J. L., \textit{Controle optimal des systemes gouvernes par des equations aux derivees partielles}, Dunod, Paris, 1968.

\bibitem{lions} Lions, J. L. 1971 \textit{Optimal Control of Systems governed by Partial
Differential Equations}, Springer.

\bibitem{lions2} Lions, J. L., \textit{Some aspects of the optimal control of distributed paremeter systems}, Regional Conference Series in Appl. Math., SIAM, Philadelphia, 1972.

\bibitem{nst} Neittaanmaki, P., Sprekels, J., Tiba, D., \textit{Optimization of elliptic systems.Theory and applications}, Springer Monographs in Mathematics. Springer, New York, 2006.

\bibitem{pedregal} Pedregal, P., A direct algorithm for constrained variational problems in several dimensions, Computers and Math. with Appl., 2017 (in press).

\bibitem{troltzsch} Tr\"oltzsch, F., \textit{Optimal control of partial differential equations. Theory, methods and applications}, Translated from the 2005 German original by Jürgen Sprekels, Graduate Studies in Mathematics, 112, American Mathematical Society, Providence, RI, 2010.

\bibitem{volkwein} Volkwein, S., \textit{Optimal and suboptimal control of partial differential equations: augmented Lagrange-SQP methods and reduced-order modeling with proper orthogonal decomposition}, Grazer Mathematische Berichte [Graz Mathematical Reports], 343, Karl-Franzens-Universität Graz, Graz, 2001.

\end{thebibliography}
\end{document}